\newtheorem{thm}{Theorem}
\theoremstyle{remark}  
\newtheorem{remark}[thm]{Remark}
\newtheorem{prop}[thm]{Proposition}
\newtheorem{lem}[thm]{Lemma}
\begin{document}

	\begin{center}
		\Large \bf Depth based trimmed means
	\end{center}
	\begin{center}
		Alejandro Cholaquidis $^\dagger$, \hspace{.2cm}
		Ricardo Fraiman $^\dagger$,  \hspace{.2cm}  Leonardo Moreno$^\ddagger$, Gonzalo Perera$^\S$\footnote{In memory: This work began with the collaboration of Gonzalo Perera who unfortunately passed away recently.} \\
		$^\dagger$ Centro de Matemáticas, Facultad de Ciencias, Universidad de la República, Uruguay. \\
 		$^\ddagger$ Instituto de Estadística, Departamento de Métodos Cuantitativos, FCEA, Universidad de la República, Uruguay.\\
        $^\S$ Centro Universitario Regional del Este - Universidad de la República, Uruguay.
	\end{center}

\begin{abstract}
	Robust estimation of location is a fundamental problem in statistics, particularly in scenarios where data contamination by outliers or model misspecification is a concern. In univariate settings, methods such as the sample median and trimmed means balance robustness and efficiency by mitigating the influence of extreme observations. This paper extends these robust techniques to the multivariate context through the use of data depth functions, which provide a natural means to order and rank multidimensional data. We review several depth measures and discuss their role in generalizing trimmed mean estimators beyond one dimension. 
	
Our main contributions are twofold: first, we prove the almost sure consistency of the multivariate trimmed mean estimator under mixing conditions; second, we establish a general limit distribution theorem  for a broad family of depth-based estimators, encompassing popular examples such as Tukey's and projection depth.
These theoretical advancements not only enhance the understanding of robust location estimation in high-dimensional settings but also offer practical guidelines for applications in areas such as machine learning, economic analysis, and financial risk assessment. 

A small example with simulated data is performed, varying the depth measure used and the percentage of trimmed data.
	
\end{abstract}

\section{Introduction}

Robust estimation of location is a cornerstone in statistics, especially when data are contaminated by outliers or when the underlying model is misspecified. In univariate analysis, the sample median is often preferred over the sample mean because of its superior breakdown point of \(1/2\), see \citet{Hampel1986}. However, despite its robustness, the median can be relatively inefficient under light-tailed distributions such as the normal distribution, see \citet{Huber1981}. To balance robustness and efficiency, trimmed means have been extensively studied and applied in practice, see \citet{Tukey1977} and  \citet{Wilcox1997}. 

Trimming, in essence, involves discarding a percentage of the extreme values from both ends of a dataset. This procedure is particularly useful when outliers might significantly distort results or lead to misleading conclusions. In practice, trimmed means are used across various fields. For instance, in economic analysis, this technique mitigates the undue influence of extreme events or anomalies that could skew indicators and trends, see \citet{wilcox2011}. 

It has been proven very recently that, for the one-dimensional case, the trimmed  achieves a Gaussian-type concentration around the mean, see \citet{oliveira2025}. Moreover, under slightly stronger assumptions, they also show that both tails of the trimmed mean satisfy a strong ratio-type approximation.  

In data science, trimming ensures that machine learning models are trained on cleaner, more representative data, thereby improving predictive performance, see \citet{rousseeuw1993}. Similarly, in financial risk assessment, trimming helps reduce the impact of anomalous data points, resulting in more reliable models for forecasting and decision-making, see \citet{barnett1994}. In medical research, trimming is applied to clinical datasets to eliminate outliers that may result from measurement errors or rare occurrences that do not reflect the broader population, see \citet{farcomeni2012}.

Extending these robust estimation ideas to the multivariate context presents additional challenges because there is no natural ordering in higher dimensions. In this setting, data depth functions have emerged as a powerful tool for ranking multivariate observations and constructing robust location estimators.

The concept of data depth was first introduced to statistics by Tukey in the 1970s, see \citet{tukey1975}, as a means of analyzing multivariate data. In \(\mathbb{R}^d\), the Tukey depth function, denoted by \(\textrm{TD}(x,P)\) for a probability measure \(P\), is defined as
\[
\textrm{TD}(x, P) \coloneqq \inf \{ P(H) : \text{\(H\) is a closed halfspace with } x \in H \}.
\]

In simple terms, a depth function measures how close a point is to the center of a distribution. The greater the depth, the more central the point. This concept is pivotal because it extends familiar ideas such as medians, ranks, and order statistics to multidimensional settings.

Building on Tukey's groundbreaking work, several alternative notions of depth have been developed. Two prominent examples are the simplicial depth and the spatial depth. The simplicial depth, introduced by  Liu, see  \citet{liu1990, liu1992, liu1992b}, is defined by
\[
\textrm{SD}(x,P) \coloneqq \mathbb{P}\big( x \in S[X_1, \ldots, X_{d+1}] \big),
\]
where \(X_1, \ldots, X_{d+1}\) are independent and identically distributed random vectors in \(\mathbb{R}^d\) with common distribution \(P\), and \(S[X_1, \ldots, X_{d+1}]\) denotes the \(d\)-dimensional simplex formed by these points, that is, the set of all convex combinations of the vertices. In contrast, the spatial depth, proposed by Serfling, see \cite{serfling2002,vardi2000}, is given by
\[
\textrm{SpD}(x,P)  \coloneqq 1 - \Big\|\mathbb{E}_{P}\big(S(x-X)\big)\Big\|,
\]
where for any nonzero vector \(x\), \(S(x)\) denotes its projection onto the unit sphere, with the convention that \(S(0)=0\), and $X$ is a random vector with distribution $P$.

In addition to these measures, other depth functions have been suggested for various types of data, including the  convex hull peeling depth, see  \citet{barnett1976}, Oja depth, see \citet{oja1983},  projection depth, see \citet{liu1992}, and spherical depth, see  \citet{elmore2006}. While some of these, like Tukey's depth, can be naturally extended to infinite-dimensional settings, others, such as Liu's simplicial depth, do not adapt as easily. In these cases, researchers have proposed alternative notions of depth (see, for example, \citet{fraiman2001, lopez2009, claeskens2014, cuevas2009}). More recently, the concept of depth has been explored within broader frameworks, including non-Euclidean metric spaces, see \citet{fraiman2019}. For a discussion on the essential properties that a depth function should satisfy (such as vanishing at infinity), see \citet{serfling2000}.

The concept of statistical depth has also been generalized to data on Riemannian manifolds and other metric spaces. For example, \citet{Cholaquidis2020} introduced the weighted lens depth---an extension of the classical lens depth that incorporates the geometry of Riemannian manifolds---and demonstrated its utility in supervised classification. In a subsequent work, \citet{CholaquidisFraimanMoreno2021} examined level sets of depth measures  providing uniform consistency results and new insights into central dispersion in general metric spaces.

An early approach to the trimmed mean for multivariate data was introduced by \citet{gordaliza1991}, who proposed impartial trimming---a data-driven procedure for discarding a small fraction of observations deemed ``outlying.'' This approach laid the foundation for robust clustering methods, such as the impartial trimmed \(k\)-means method introduced by \citet{CuestaAlbertos1997}. The robustness of this method was later examined in detail by \citet{GarciaEscudero1999a}, demonstrating its effectiveness in the presence of outliers. Subsequently, \citet{escudero2008} extended the clustering procedure to adjust clusters with varying dispersion and weight. More recently, the concept of impartial trimming has been adapted to functional data analysis, see   \citet{CuestaAlbertosFraiman2007}, thereby broadening its applicability.

The extension of L-estimators to the multivariate case, as discussed in \citet{fraiman1999}, provided a powerful tool for generalizing the concept of trimmed means to higher-dimensional contexts---even for infinite-dimensional data, see also \citet{fraiman2001}. The key idea is that depth measures in multivariate data facilitate the generalization of the trimmed mean concept to \(\mathbb{R}^d\), as proposed in \citet{Zuo2002}. In that work it is obtained 
the consistency and asymptotic normality of a smoothed version of the trimmed mean estimator, albeit under restrictive assumptions regarding the depth measure and the underlying data distribution,   see Remark \ref{zuo} for a detailed comparison with our results. 

Subsequent research has extended these ideas, investigating various properties of multivariate trimmed means---such as robustness, consistency, and limiting distributions---for specific depth measures. For example, \citet{Zuo2006} focuses on projection depth, showing that it leads to a Gaussian limit distribution, while \citet{masse2004, masse2009} studied Tukey’s depth. In   \citet{stigler1973asymptotic} it is proven that in the one-dimensional case, the limit distribution can be non-Gaussian.  In particular, in \cite{ilienko2025}, the authors explore empirical Tukey depth and prove strong limit theorems similar to the Marcinkiewicz-Zygmund law. They study the convergence of empirical depth-trimmed regions to the theoretical ones, focusing on uniform distributions on convex bodies, with convergence rates of \( (n^{-1} \log \log n)^{1/2} \).
These results highlight the diverse behaviours of multivariate trimmed means, depending on the depth measure employed.

The main contribution of our work is twofold. First, we prove the almost sure consistency of the multivariate trimmed mean estimator in the case of mixing data. Second, we obtain the asymptotic distribution of the trimmed mean, for a broad family of depth measures, which includes Tukey's depth and projection depth as special cases,   see also Remark \ref{zuo} for a detailed comparison of this result and a quite restrictive one, existing in the literature.

%
%

\subsection{The multivariate trimmed mean}

In what follows, we investigate multivariate trimmed means defined via general depth functions under relatively weak conditions. Specifically, we assume the existence of a depth function,
$	D: \mathbb{R}^d \to (0, T],$ and consider a sequence $\{\hat{f}_n\}_n$ of density estimators, that are assumed to be continuous functions. For instance, given a suitably chosen continuous kernel $K_{h_n}$ which depends on a bandwidth $h_n$,  $\hat{f}_n(x)=(K_{h_n} * F_n)(x)$, where $F_n$ is the empirical distribution function and $*$ denotes the convolution.

\noindent
Let $\mathcal{F}$ the set of continuous and bounded functions on $\mathbb{R}^d$.  Consider the map,
\[
D=D(x,g) :\;\mathbb{R}^d \times \mathcal{F}\;\longrightarrow\;\mathbb{R},
\]
where $D(x,g)$ is the depth corresponding to the density $g$.  The multivariate trimmed mean is defined as
\begin{equation}\label{popversion}
	\Pi(D,f):= \int_{\mathbb{R}^d } t \ \mathbb I_{\{D(t)\geq a\}} f(t)dt,
\end{equation}
while, if $D_n$ denotes an estimator of $D$ based on a sample $X_1,\dots,X_n$ of a random vector with density $f$,  the empirical counterpart of $\Pi$ is 
\begin{equation}\label{estimador}
	\Pi(D_n, \hat f_n):= \int_{\mathbb{R}^d } t \ \mathbb I_{\{D_n(t)\geq  a\}}\hat f_n(t)dt.
\end{equation}

To prove the almost sure (a.s.) consistency of \eqref{popversion} $D_n$ can be any estimator that converges a.s. to $D$, see \textbf{(H'4)} below. However, the limiting distribution is obtained for a particular choice of $D_n$. We replace the empirical version of the depth function $
\tilde{D}_n(x)=\,D(x,F_n)$ by a smooth version given by $D_n(x)=\,D(x,\hat{f}_n)
$. 

For  instance, for  Liu's simplicial depth we can take a regular version for the empirical version, i.e.
\[
D(x,g) \;=\; P_G\bigl(x \in \text{Simplex}\bigr),
\quad
\text{where $G$ has density $g$.}
\]
That is
$
D(x,g) = P_G\bigl(x \in \text{Simplex}\bigr),
$
as defined  in Liu's paper, but,
$
D_n \;\text{will be}\;P_{G_n}\bigl(x \in \text{Simplex}\bigr)$ with
$G_n \sim \hat{f}_n,
\;\text{instead of}\;
P_{F_n}\bigl(x \in \text{Simplex}\bigr)
$.

The main questions we will address correspond to strong consistency and the asymptotic distribution of $\Pi(D_n, \hat f_n)$. 
 
\begin{remark} \label{zuo}
	Although a general Central Limit Theorem can be found in \citet{Zuo2002} (see Theorem 3.1), the assumptions taken in that work include the Hadamard differentiability of the operator \eqref{popversion} (see hypothesis C4 in the mentioned document). In our work, this is not assumed as a hypothesis but is instead obtained the asymptotic distribution \eqref{estimador} as a consequence of the regularity properties of \(D\) (see hypotheses \textbf{(H2)} and \textbf{(H3)} below), which constitutes a considerable leap in generality. Furthermore, hypothesis C5 of \citet{Zuo2002} requires that the operator \eqref{popversion} be close to \eqref{estimador} at a rate proportional to \(F_n-F\). Again, here that hypothesis is not required; instead, such consistency is derived separately. 	Notably, until now, this work has been the only study in the literature addressing the asymptotic distribution of the estimator in the general case. Furthermore, in the aforementioned paper the asymptotic distribution is always normal— a property that does not generally hold, see \citet{stigler1973asymptotic}. For Tukey depth, a  an asymptotic distribution  Theorem  is established in \citet{masse2004} when the trimmed mean (see Equation \ref{popversion}) is replaced by a smoothed version, in which the indicator function is substituted with a continuously differentiable weight $W$. A similar result was previously obtained for simplicial depth in \citet{dumbgen1992limit}, again by replacing the trimmed mean with a smoothed version.
\end{remark}

\subsection{Road map}

The remainder of the paper is organized as follows.  In Section \ref{geometri} we study some topological and geometric properties of the level sets of $D$ which will be used through the manuscript. Specially to get the limiting distribution Theorem for the trimmed means.  The almost sure consistency of the estimator is obtained in Section \ref{consistency}.  Section \ref{hadd} is quite technical and it is devoted to prove the differentiability of the functional $\Pi(D,f)$ w.r.t the second component, in the Hadamard sense. This will be a key ingredient to get the limiting distribution Theorem, obtained in Section \ref{tclsec}. At the end of this Section, a small simulation example is conducted to illustrate the shape of the limiting distribution.

\section{Main hypotheses}

Let  $a\in (0,T)$ fixed, and a depth $D: \mathbb{R}^d \to (0,T]$.
The following set of hypotheses will be considered through the  manuscript.

\begin{itemize}
	\item[\textbf{(H1)}] $D$ is continuous in $\mathbb{R}^d$,  and $	\lim_{\|t\| \to +\infty} D(t) = 0.$
	\item[\textbf{(H2)}] There exists $\delta>0$ and a finite number of points $\mu_1,\dots,\mu_\ell$, with $\mu_i \in \mathbb{R}^d$ for all $i=1,\dots,\ell$, such that:
	 $D(\mu_i)> a$. Also, for each $\mu_i$ and each $\vec{n}\in S^{d-1}$ there exists a real value $\gamma_i(a,\vec{n})\in \mathbb{R}^+$,  such that,	the functions 
	$$\lambda\in (-\delta,\delta) \mapsto D(\mu_i+(\lambda+\gamma_i(a,\vec{n}))\vec{n}),$$
	are strictly decreasing, and  $\gamma_i(a,\vec{n})$ is such that $D(\mu_i+\vec{n}\gamma_i(a,\vec{n}))=a$.  As we will see \textbf{(H2)} allows to define  $\ell$ functions $\tau_i: \Omega_i\subset (0,T]\times S^{d-1}\to \mathbb{R}^d\setminus\{\mu_i\},$
	by setting
	\[
	\tau_i(s,\vec{n}) = \mu_i + \bigl(\lambda_s+\gamma_i(a,\vec{n})\bigr)\vec{n},
	\]
	where $\lambda_s\in(-\delta,\delta)$ is the unique value for which $D (\tau_i(s,\vec{n}))=s$. See Section \ref{geometri} for details.
	\item[\textbf{(H3)}] $D$ is of class $\mathcal{C}^p$, $1 < p \leq \infty$, and for all $i=1,\dots,\ell$ $d\tau_i^{-1}$ has no singular points in $\tau_i(\Omega_i)$ (i.e., $d_x\tau_i^{-1}$ has full rank $\forall x\in  \tau_i(\Omega_i)$, $\Omega_i$ as in \textbf{(H2)}).
	\item[\textbf{(H4)}] \( f \)  is probability density, continuos on an open set $\mathcal{U}$, containing $\{t:D(t)\geq a\}\cup_i \tau_i(\Omega_i)$.
\end{itemize}

\begin{remark}\label{remhipo}

 Hypothesis \textbf{(H1)} holds for a wide range of depth measures, including lens depth, simplicial depth, L1-depth, and Tukey depth, among many others. In these cases, no additional assumptions on $X$ are required beyond the existence of a density, see \citet{serfling2000}. Hypothesis \textbf{(H2)} is satisfied, for example, when $X$ is symmetric with respect to a single point (see \citet{serfling2000}, p. 464) for simplicial depth, Tukey depth, and L1-depth. 	Moreover, this hypothesis is considerably more general since it permits a finite number of modes. 	Hypothesis \textbf{(H3)} is used to  get the asymptotic distribution in  Theorem \ref{tcl} below. To this aim \textbf{(H3)} for  $p=2$ is enough. This ensures that each $\tau_i$ is a diffeomorphism, see Lemma \ref{lemdifeo}. 	Hypothesis \textbf{(H4)} is  a quite weak assumption that includes most well-known densities.

\end{remark}

\section{On the level sets of $D$}  	\label{geometri}

Given $a\in (0,T)$, we associate to $a$ its nearest centre $\mu_i$, as introduced in \textbf{(H2)}); if two or more centres are equidistant from $a$, one is chosen arbitrarily. We then define
\[
\tau_i(a,\vec{n}) = \mu_i + \gamma_i(a,\vec{n})\,\vec{n},
\]
so that, by definition, $D\bigl(\tau_i(a,\vec{n})\bigr)=a$. Note that, under hypothesis \textbf{(H2)}, the mapping
\[
\lambda\in(-\delta,\delta) \longmapsto D\Bigl(\mu_i+(\lambda+\gamma_i(a,\vec{n}))\vec{n}\Bigr),
\]
is strictly decreasing. Consequently, if $\lambda\neq 0$, then
$D(\mu_i+(\lambda+\gamma_i(a,\vec{n}))\vec{n})\neq a.$

More generally, we define the functions $\tau_i: \Omega_i\subset (0,T]\times S^{d-1}\to \mathbb{R}^d\setminus\{\mu_i\},$
by setting
\[
\tau_i(s,\vec{n}) = \mu_i + \bigl(\lambda_s+\gamma_i(a,\vec{n})\bigr)\vec{n},
\]
where $\lambda_s\in(-\delta,\delta)$ is the unique value for which $D (\tau_i(s,\vec{n}))=s$.

Since $D$ is a continuous function and $D(\mu_i)>a$ for all $i=1.\dots, \ell$, we can take $\delta$ small enough such that 
\begin{equation}\label{menormu}
	D(\mu_i+(-\delta+\gamma_i(a,\vec{n}))\vec{n}))<\mu_i
\end{equation}
for all $\vec{n}$, and 
\begin{equation}\label{disjoint}
	\tau_i(\Omega_i)\cap \tau_j(\Omega_j)=\emptyset\qquad \forall i\neq j.
\end{equation} 

In what follows we assume that $\delta$ is small enough to guarantee \eqref{menormu} and \eqref{disjoint}.

\begin{lem}  For each $i=1,\dots,\ell$, $\tau_i$ is a homeomorphism over its image, $\tau_i(\Omega_i)$,  whose inverse is given by:
	\begin{equation}\label{eq1}
		\tau_i^{-1} (x) = \left( D(x), \frac{x - \mu_i}{\| x - \mu_i \|} \right), \quad \forall x \in \tau_i(\Omega_i).
	\end{equation} 
\end{lem}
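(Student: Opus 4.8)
The plan is to exhibit the explicit inverse \eqref{eq1} and then verify the three ingredients of a homeomorphism: bijectivity onto the image, continuity of $\tau_i^{-1}$, and continuity of $\tau_i$. First I would record the elementary geometric fact underlying everything: every point $x=\tau_i(s,\vec n)=\mu_i+(\lambda_s+\gamma_i(a,\vec n))\vec n$ lies on the open ray emanating from $\mu_i$ in the direction $\vec n$. Indeed, since $\gamma_i(a,\vec n)\in\mathbb{R}^+$ and $\delta$ has been chosen small enough (as already stipulated around \eqref{menormu}), the radial coefficient $r:=\lambda_s+\gamma_i(a,\vec n)$ is strictly positive, so $x\neq\mu_i$ and $x-\mu_i=r\vec n$ with $\|x-\mu_i\|=r$. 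This yields at once the two recovery identities $\tfrac{x-\mu_i}{\|x-\mu_i\|}=\vec n$ and $D(x)=D(\tau_i(s,\vec n))=s$, the latter by the defining property of $\lambda_s$. Consequently the map $g(x):=\bigl(D(x),(x-\mu_i)/\|x-\mu_i\|\bigr)$ satisfies $g\circ\tau_i=\mathrm{id}_{\Omega_i}$, which gives injectivity of $\tau_i$; surjectivity onto $\tau_i(\Omega_i)$ holds by definition, and $\tau_i\circ g=\mathrm{id}$ on the image is immediate from the recovery identities. Hence $g=\tau_i^{-1}$ as a map of sets.

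Continuity of $\tau_i^{-1}=g$ is then straightforward: $D$ is continuous by \textbf{(H1)}, and $x\mapsto(x-\mu_i)/\|x-\mu_i\|$ is continuous on $\mathbb{R}^d\setminus\{\mu_i\}$, while the image $\tau_i(\Omega_i)$ avoids $\mu_i$ by the previous paragraph.

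The real work, and where I expect the main obstacle, is the continuity of $\tau_i$ itself; note that a continuous bijection need not have continuous inverse, so a continuous $g$ is not by itself sufficient. I would instead prove directly that the radial coefficient $r(s,\vec n)$ (equivalently, both $\lambda_s$ and $\gamma_i(a,\vec n)$) depends jointly continuously on $(s,\vec n)$. The clean route is to introduce $F(r,\vec n):=D(\mu_i+r\vec n)$, which is jointly continuous as a composition of continuous maps and which, for each fixed $\vec n$, is strictly decreasing in $r$ on the relevant interval by \textbf{(H2)}; the coefficient $r(s,\vec n)$ is precisely the value solving $F(r,\vec n)=s$. To show continuity at $(s_0,\vec n_0)$, fix $\varepsilon>0$: strict monotonicity gives $F(r_0-\varepsilon,\vec n_0)>s_0>F(r_0+\varepsilon,\vec n_0)$, and by joint continuity of $F$ these strict inequalities persist for $(s,\vec n)$ in a neighbourhood of $(s_0,\vec n_0)$, which by monotonicity forces $r(s,\vec n)\in(r_0-\varepsilon,r_0+\varepsilon)$. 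Taking $s=a$ shows in particular that $\gamma_i(a,\cdot)$ is continuous on $S^{d-1}$, closing the one apparent gap, and then $\tau_i(s,\vec n)=\mu_i+r(s,\vec n)\vec n$ is continuous.

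Two points of care guide the write-up. First, $\delta$ must be small enough (consistently with \eqref{menormu}) to keep $r>0$, so that the normalization in \eqref{eq1} returns $+\vec n$ rather than $-\vec n$. Second, it is the strict-monotonicity clause of \textbf{(H2)}, and not any differentiability, that powers the sandwiching argument; thus only \textbf{(H1)} and \textbf{(H2)} are needed for this Lemma, with \textbf{(H3)} reserved for the subsequent diffeomorphism statement.
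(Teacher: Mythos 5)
Your set-theoretic work matches the paper: the recovery identities giving $g\circ\tau_i=\mathrm{id}_{\Omega_i}$ and $\tau_i\circ g=\mathrm{id}$ on the image (hence bijectivity and $g=\tau_i^{-1}$), and the continuity of $\tau_i^{-1}$ from \textbf{(H1)} together with $\mu_i\notin\tau_i(\Omega_i)$ (via \eqref{menormu}), are exactly the paper's steps, and your treatment of the sign of the radial coefficient is if anything more careful. You are also right that a continuous bijection need not be a homeomorphism, so something more is needed. But here the routes diverge: the paper never proves continuity of $\tau_i$ directly. It applies the Invariance of Domain theorem to $\tau_i^{-1}$, which is continuous and injective on $\tau_i(\Omega_i)$ (asserted to be open in $\mathbb{R}^d$), concluding that $\tau_i^{-1}$ is an open map and hence a homeomorphism; continuity of $\tau_i$ then comes for free, with no analysis of $\gamma_i(a,\cdot)$ at all.

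Your substitute for this step --- the sandwiching argument for the radial coefficient --- has a genuine circularity. \textbf{(H2)} gives strict monotonicity of $F(r,\vec{n})=D(\mu_i+r\vec{n})$ only on the window $I_{\vec{n}}=(\gamma_i(a,\vec{n})-\delta,\gamma_i(a,\vec{n})+\delta)$, centred at the \emph{chosen} crossing $\gamma_i(a,\vec{n})$. In the step ``these strict inequalities persist in a neighbourhood, which by monotonicity forces $r(s,\vec{n})\in(r_0-\varepsilon,r_0+\varepsilon)$'', the forcing requires comparing $F(r_0\pm\varepsilon,\vec{n})$ with $F(r(s,\vec{n}),\vec{n})$ by monotonicity, which is only legitimate if $r_0\pm\varepsilon\in I_{\vec{n}}$; and knowing that $I_{\vec{n}}$ (equivalently $\gamma_i(a,\vec{n})$) stays close to $I_{\vec{n}_0}$ is precisely the $s=a$ case of the continuity you are trying to prove. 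Nothing in \textbf{(H2)} forbids $D$ from crossing the level $a$ several times along some rays, each crossing locally strictly decreasing; the hypothesis only asserts \emph{existence} of a valid $\gamma_i(a,\vec{n})$, so for nearby directions the chosen crossing could a priori sit on a different branch, in which case $r_0\pm\varepsilon$ lies outside $I_{\vec{n}}$, where $F$ is unconstrained, and the persisted inequalities say nothing about $r(s,\vec{n})$. So your argument does not close the gap about $\gamma_i(a,\cdot)$; it assumes it. To repair the proof you must either invoke Invariance of Domain as the paper does (note the paper in turn asserts, without proof, the openness of $\tau_i(\Omega_i)$, which is where the analogous subtlety is buried), or add a coherence assumption such as uniqueness of the level-$a$ crossing along each ray, after which continuity of $\gamma_i(a,\cdot)$ follows from a compactness-plus-uniqueness argument (crossings are bounded because $D(t)\to 0$ as $\|t\|\to\infty$ by \textbf{(H1)}), not from the sandwich as written.
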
	
\begin{proof} We remove, for ease of writing, the dependence on $i$ on the subindexes.
	
	Let $\nu$ be the function defined by \eqref{eq1} i.e.,
	\[
	\nu (x) = \left( D(x), \frac{x - \mu}{\| x - \mu \|} \right).
	\]
	Observe that, by \eqref{menormu}, $\mu\notin \tau(\Omega)$.
	If $x \neq \mu$, then $\tau(\nu(x)) = x$, then 
	$D(y)=D(x)$, and there exists $\lambda$, such that, 
	$$y=\mu+\lambda\frac{x-\mu}{\|x-\mu\|}.$$
	But, from 
	$$x=\mu+\|x-\mu\|\frac{x-\mu}{\|x-\mu\|}$$
	and \textbf{(H1)}, it follows that $\lambda=\|x-\mu\|$. Then $x=y$.
	This proves that $\tau(\nu(x))=x$ and, 
	\begin{equation}\label{A}
		\tau \circ \nu = \text{Id} (\tau(\Omega)) ,
	\end{equation}
	
	where $\text{Id}(\cdot)$ denotes the identity map.
	
	On the other hand, if $\vec{n} \in S^{d-1}$, then by definition of $\nu$:
	\[
	\nu (\tau(a, \vec{n})) = \left(a, \frac{\tau(a, \vec{n}) - \mu}{\| \tau(a, \vec{n}) - \mu \|} \right) = (a, \vec{n}),
	\]
	which implies:
	\begin{equation}\label{B}
		\nu \circ \tau = \text{Id} (\Omega).
	\end{equation}
	From \eqref{A} and \eqref{B} we deduce that $\tau:\Omega\to \tau(\Omega)$ is bijective and that $\tau^{-1} = \nu$.
	
	Since $\tau^{-1}$ is bijective and continuous, and its domain is an open set of $\mathbb{R}^d$, by the Invariance of the Domain Theorem, see \citet{engelking1992topology}, we conclude that $\tau^{-1}$ is a homeomorphism.
\end{proof}

\begin{lem}\label{lem3} If $C_a = \{ x \in \mathbb{R}^d:D(x) = a \}$ , then $C_a$ is a union of $\ell$ disjoint sets, $C_a^1,\dots,C_a^\ell$, each of them homeomorphic to $S^{d-1}$.  
\end{lem}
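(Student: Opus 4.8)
The plan is to exhibit the $\ell$ pieces explicitly as the images under the maps $\tau_i$ of the single level slice $\{a\}\times S^{d-1}$, and then verify the three required properties separately. First I would record that $\{a\}\times S^{d-1}\subseteq\Omega_i$ and that $\tau_i(a,\vec n)=\mu_i+\gamma_i(a,\vec n)\vec n$: since $D\bigl(\mu_i+\gamma_i(a,\vec n)\vec n\bigr)=a$ and $\lambda\mapsto D\bigl(\mu_i+(\lambda+\gamma_i(a,\vec n))\vec n\bigr)$ is strictly decreasing by \textbf{(H2)}, the unique $\lambda_s$ realizing depth $s=a$ is $\lambda_a=0$. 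I then set
\[
C_a^i:=\tau_i\bigl(\{a\}\times S^{d-1}\bigr)=\bigl\{\mu_i+\gamma_i(a,\vec n)\,\vec n:\vec n\in S^{d-1}\bigr\}.
\]

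For the topology of each piece, the preceding Lemma shows that $\tau_i\colon\Omega_i\to\tau_i(\Omega_i)$ is a homeomorphism, and the restriction of a homeomorphism to a subspace is a homeomorphism onto its image; as $\{a\}\times S^{d-1}$ is canonically homeomorphic to $S^{d-1}$, this yields $C_a^i\cong S^{d-1}$. (Equivalently, $\tau_i(a,\cdot)$ is a continuous bijection from the compact sphere onto the Hausdorff set $C_a^i$, hence a homeomorphism.) Disjointness is immediate: $C_a^i\subseteq\tau_i(\Omega_i)$, so \eqref{disjoint} gives $C_a^i\cap C_a^j=\emptyset$ for $i\neq j$.

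It remains to prove $C_a=\bigcup_{i=1}^{\ell}C_a^i$. The inclusion $\supseteq$ holds because $D\equiv a$ on each $C_a^i$ by construction. For $\subseteq$, given $x$ with $D(x)=a$ I would assign to it its nearest centre $\mu_i$ as in the opening of this section and put $\vec n=(x-\mu_i)/\|x-\mu_i\|$; provided $x\in\tau_i(\Omega_i)$, the inverse formula \eqref{eq1} yields $\tau_i^{-1}(x)=(D(x),\vec n)=(a,\vec n)$, and hence $x=\tau_i(a,\vec n)\in C_a^i$.

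The main obstacle is precisely this membership $x\in\tau_i(\Omega_i)$: hypothesis \textbf{(H2)} only provides strict monotonicity of $D$ along each ray from $\mu_i$ inside the $\delta$-band around the crossing radius $\gamma_i(a,\vec n)$, so one must exclude the possibility that some point of $C_a$ sits on a ray outside every such band. I would close this gap by exploiting the global structure: since $\lim_{\|t\|\to\infty}D(t)=0$, the superlevel set $\{D\ge a\}$ is compact with each $\mu_i$ in its interior, and \eqref{menormu} together with the local decrease across $\gamma_i(a,\vec n)$ identifies $C_a^i$ with the topological boundary of the connected component of $\{D\ge a\}$ containing $\mu_i$. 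As every point of $C_a$ is a boundary point of $\{D\ge a\}$, it lies on exactly one such component boundary, i.e.\ in exactly one $C_a^i$. Making this identification of $C_a^i$ with a full component boundary rigorous---rather than merely a local crossing surface---is the delicate part of the argument.
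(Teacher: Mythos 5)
Your treatment of the pieces themselves coincides with the paper's own proof: the paper also takes $C_a^i=\{\tau_i(a,\vec n):\vec n\in S^{d-1}\}$, proves $C_a^i\cong S^{d-1}$ via the continuous bijection $\varphi_a^i(\vec n)=\tau_i(a,\vec n)$ with continuous inverse $x\mapsto (x-\mu_i)/\|x-\mu_i\|$ (your restriction-of-$\tau_i$ argument, or the compactness-plus-Hausdorff argument, is the same fact in different packaging), and it obtains disjointness from \eqref{disjoint}. The only divergence is the covering $C_a\subseteq\bigcup_i C_a^i$, which the paper dispatches with ``It is clear that'' and which you correctly refuse to take for granted: as you note, \textbf{(H2)} taken literally only produces, on each ray from $\mu_i$, one crossing radius $\gamma_i(a,\vec n)$ with strict monotonicity in a $\delta$-band, and says nothing about level-$a$ points elsewhere on the ray.

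However, the global argument you sketch to close this gap cannot be completed, and not merely because it is delicate. First, a point of $C_a$ need not be a topological boundary point of $\{D\ge a\}$: nothing in \textbf{(H1)}--\textbf{(H2)} rules out a plateau or saddle of $D$ at level $a$ away from the designated crossings. Second, identifying $C_a^i$ with the full boundary of the connected component of $\{D\ge a\}$ containing $\mu_i$ is exactly the covering statement in disguise, so the argument is circular. In fact no argument can work, because under the literal reading of \textbf{(H2)} the lemma is false: take $d=1$, $\ell=1$, $\mu_1=0$, $a=1/2$, and a smooth even $D$ with $D(0)=1$ which on $[0,\infty)$ decreases to $0.3$, climbs back to $0.8$, and then decreases to $0$ at infinity. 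This satisfies \textbf{(H1)} and \textbf{(H2)} (choose $\gamma(a,\pm 1)$ at the innermost crossings, $\delta$ small), yet $C_a$ consists of six points and hence is not a union of $\ell=1$ sets homeomorphic to $S^0$. The conclusion is that the covering --- equivalently, uniqueness of the level-$a$ radius on each ray attached to each $\mu_i$ --- must be read into \textbf{(H2)} as an implicit hypothesis, which is what the paper silently does; it is not derivable. So your proof is incomplete at exactly the point where the paper's is, and your identification of that point is the accurate (and valuable) part of the write-up, while the proposed repair should be replaced by a strengthened hypothesis rather than an argument.
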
	
\begin{proof} 
	It is clear that
	\[ C_a = \cup_{i=1}^\ell \{ x \in \mathbb{R}^d : x = \tau_i(a,\vec{n}) \text{ for some } \vec{n} \in S^{d-1} \}=\cup_{i=1}^\ell C_a^{i}. \]
	Since $0 < a < D(\mu_i)$  $C_a^i \neq \emptyset$  and, from \eqref{menormu} $\mu_i \notin C_a^i$ . \\
	Consider $\varphi_a^i: S^{d-1} \to C_a^i$ given by $\varphi_a^i(\vec{n}) = \tau_i(a,\vec{n})$, which implies $\varphi_a$ is a continuous bijection, with
	\[ (\varphi_a^i)^{-1}(x) = \frac{x - \mu_i}{\| x - \mu_i \|}, \]
	continuous, hence $\varphi_a^i$ is a homeomorphism.  By \eqref{disjoint} the sets $C_a^i$ are disjoint.
\end{proof}

\begin{lem} \label{lemdifeo}
	Under \textbf{(H1)} to \textbf{(H3)}, $\tau_i$ is a $\mathcal{C}^p$-diffeomorphism  and $C_a^i$ is $\mathcal{C}^p$-diffeomorphic to $S^{d-1}$ for all $i=1,\dots,\ell$.
\end{lem}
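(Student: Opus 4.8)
The plan is to exploit the explicit inverse formula \eqref{eq1} rather than differentiate $\tau_i$ directly. From the first Lemma of this section we already know that each $\tau_i$ is a homeomorphism onto the open set $\tau_i(\Omega_i)\subset\mathbb{R}^d$, with inverse $\tau_i^{-1}(x)=\bigl(D(x),\,(x-\mu_i)/\|x-\mu_i\|\bigr)$, so it suffices to upgrade this homeomorphism to a $\mathcal{C}^p$-diffeomorphism. The regularity of $\tau_i^{-1}$ is transparent: $D\in\mathcal{C}^p$ by \textbf{(H3)}, the normalization $x\mapsto (x-\mu_i)/\|x-\mu_i\|$ is $\mathcal{C}^\infty$ on $\mathbb{R}^d\setminus\{\mu_i\}$, and \eqref{menormu} guarantees $\mu_i\notin\tau_i(\Omega_i)$; hence $\tau_i^{-1}$ is $\mathcal{C}^p$ on all of $\tau_i(\Omega_i)$.

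Next I would invoke the inverse function theorem. The domain $\tau_i(\Omega_i)$ is open in $\mathbb{R}^d$, while the codomain $\Omega_i\subset (0,T]\times S^{d-1}$ is a $d$-dimensional manifold, so both tangent spaces are $d$-dimensional; the full-rank hypothesis on $d_x\tau_i^{-1}$ in \textbf{(H3)} therefore forces $d_x\tau_i^{-1}$ to be a linear isomorphism at every $x$. Working in local charts on $S^{d-1}$, the inverse function theorem then makes $\tau_i^{-1}$ a local $\mathcal{C}^p$-diffeomorphism at each point. Since $\tau_i^{-1}$ is moreover a global bijection (and homeomorphism) by the first Lemma, a local diffeomorphism that is also a homeomorphism is a global $\mathcal{C}^p$-diffeomorphism; consequently its inverse $\tau_i$ is $\mathcal{C}^p$ as well, and $\tau_i$ is a $\mathcal{C}^p$-diffeomorphism. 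Equivalently, one can phrase this as: $\tau_i^{-1}$ is an injective $\mathcal{C}^p$ immersion that is a homeomorphism onto its image, hence a $\mathcal{C}^p$ embedding.

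For the assertion about $C_a^i$, recall from Lemma \ref{lem3} that $C_a^i=\tau_i(\{a\}\times S^{d-1})$ and that $\varphi_a^i(\vec{n})=\tau_i(a,\vec{n})$ is a bijection $S^{d-1}\to C_a^i$ with inverse $x\mapsto (x-\mu_i)/\|x-\mu_i\|$. Since $\{a\}\times S^{d-1}$ is a $\mathcal{C}^\infty$ submanifold of $\Omega_i$ diffeomorphic to $S^{d-1}$, and $\tau_i$ is now known to be a $\mathcal{C}^p$-diffeomorphism, its restriction $\varphi_a^i$ is a $\mathcal{C}^p$ map whose inverse is $\mathcal{C}^\infty$; thus $\varphi_a^i$ is a $\mathcal{C}^p$-diffeomorphism onto $C_a^i$, and $C_a^i$ is $\mathcal{C}^p$-diffeomorphic to $S^{d-1}$.

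The main obstacle is the bookkeeping surrounding the inverse function theorem in this mixed Euclidean/manifold setting: one must confirm that the abstract ``full rank'' condition of \textbf{(H3)} genuinely yields an isomorphism of the relevant $d$-dimensional tangent spaces (which relies on the dimension count above and on $\tau_i(\Omega_i)$ being open in $\mathbb{R}^d$), and then promote the purely local conclusion of the theorem to a global diffeomorphism using the already-established bijectivity. One should also note the manifold-with-boundary nature of $(0,T]$ at the endpoint $T$, but since $a\in(0,T)$ the level set $C_a^i$ lives in the interior, so no boundary issue arises there.
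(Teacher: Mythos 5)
Your proof is correct and takes essentially the same route as the paper's: both upgrade the homeomorphism $\tau_i^{-1}$ to a $\mathcal{C}^p$-diffeomorphism using its explicit formula \eqref{eq1}, the full-rank condition in \textbf{(H3)}, and the inverse mapping theorem, and both then conclude that $C_a^i$ is $\mathcal{C}^p$-diffeomorphic to $S^{d-1}$ by observing that $\varphi_a^i=\tau_i(a,\cdot)$ and its explicit inverse $x\mapsto (x-\mu_i)/\|x-\mu_i\|$ are both $\mathcal{C}^p$. Your write-up simply makes explicit the chart and dimension bookkeeping that the paper's terser proof leaves implicit.
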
	
\begin{proof} 
	
	Since $\tau_i^{-1}$ is of class $\mathcal{C}^p$ in $\tau_i(\Omega_i)$ open, $\tau_i^{-1}$ is bijective and $d\tau_i^{-1}$ has no singular points. By the inverse mapping Theorem, see \citet{lang2012real}, $\tau_i^{-1}$ is a diffeomorphism of class $\mathcal{C}^p$, hence so is $\tau_i$.  For $C_a^i$,   both $\varphi_a^i(\cdot) = \tau_i(a,\cdot)$ and
	\[ (\varphi_a^i)^{-1}(\cdot) = \frac{\cdot - \mu}{\| \cdot - \mu \|}, \]
	are $\mathcal{C}^p$.  
	
\end{proof}

\section{Consistency}\label{consistency}

In this section we consider, as before, a data–depth function $D:\mathbb{R}^d \to (0,T]$,  $a\in (0,T)$, and  any estimator of \(D\), denoted by \(D_n\), which is constructed from a sample \(X_1, X_2, \dots, X_n\) of a random vector \(X\). We assume that the common distribution of \(X\) has a density \(f\) but do not require the sample to be iid.   In fact, the consistency results will be obtained from hypothesis \textbf{(H2)} and the following four weak hypotheses.  

\begin{itemize}
	\item[\textbf{(H'1)}] \(\mathbb{E}(\|X\|) < \infty\).
	
	\item[\textbf{(H'2)}] Let \(\hat{f}_n\) be a sequence of estimators of \(f\) (based on \(X_1, X_2, \dots, X_n\)) such that
	\[
	\hat{f}_n \to f \quad \text{almost surely, uniformly on compact sets.}
	\]

	\item[\textbf{(H'3)}] \(\mathbb{P}\bigl(D(X) = a\bigr) = 0\).
	
	\item[\textbf{(H'4)}] The estimator \(D_n\) is such that
	\[
	S_n := \sup_{x\in \mathbb{R}^d} \bigl|D_n(x) - D(x)\bigr| \xrightarrow{\text{a.s.}} 0.
	\]
	where, in what follows,  $\xrightarrow{\text{a.s.}}$ will denote almost sure convergence.

\end{itemize}

\begin{remark}
	\begin{enumerate}
		\item By considering a kernel–based density estimator, hypothesis \textbf{(H'2)} is satisfied, see \citet{silverman1978}. Even Property \textbf{(H'2)} is satisfied under some conditions of sample dependence, see \citet{roussas1991, gine2002, einmahl2005}. 
		\item Hypothesis \textbf{(H'3)}  is met if \(D(X)\) has a density or, as follows from Lemma~\ref{lem3}, if \(X\) has a density and hypothesis \textbf{(H1)} and \textbf{(H2)} holds.  
		\item if \(D_n\) is the empirical version of \(D\) (based on an iid sample), then uniform convergence \textbf{(H'4)} is ensured for many depth notions, see Remark A.3 of \citet{serfling2000}.  \citet{donoho1992} proved it for the sample half-space depth (Tukey's depth). Similarly, they prove the convergence for the simplicial depth, see \citet{liu1990}, \citet{dumbgen1992limit}, and \citet{arcones1993}. Moreover, other works show more depth measures where the hypothesis \textbf{(H'4)} is satisfied (for the iid case), see \citet{CholaquidisFraimanMoreno2021, liu1993,zuo2000b}.   For the simplicial depth \textbf{(H'4)} holds even when $X_1,\dots,X_n$ are not iid, from Theorem 1 in \citet{dumbgen1992limit}, when $D_n(x)=D(x,P_n)$ being $P_n$ any sequence such that $P_n\to P$ weakly. The same holds for Tukey's depth, see \citet{donoho1992},   pp. 1816--1817)
	\end{enumerate}
	
\end{remark}

The following theorem  states that the trimmed mean estimator \eqref{estimador} converges almost surely to \eqref{popversion}.

\begin{thm} Assume \textbf{(H2)} and \textbf{(H'1)} to \textbf{(H'4)}. Then, 
	$$\Pi(D_n,\hat{f}_n)\xrightarrow{\text{a.s.}} \Pi(D,f)\qquad \text{as } n\to \infty.$$ 
\end{thm} 
\begin{proof}
	Let   \[
	\Pi(D,\hat{f}_n)
	\; =\; \int_{\mathbb{R}^d} t\,\mathbf{1}_{\{D(t)\geq a\}}\, \hat{f}_n(t)dt.
	\]
	Then, 
	\[
	\Pi(D,\hat{f}_n) - \Pi(D,f)
	\;=\; \int_{\mathbb{R}^d} t\,\mathbf{1}_{\{D(t)\geq a\}}\, \hat{f}_n(t)dt
	\;-\; \int_{\mathbb{R}^d} t\,\mathbf{1}_{\{D(t)\geq a\}}\,f(t)dt.
	\]
	
	\bigskip
	
	\noindent
	From  \textbf{(H'2)} and \textbf{(H2)} $	 \Pi(D,\hat{f}_n) \;\xrightarrow{\text{a.s.}}\; \Pi(D,f)$ as $n \to \infty$. 
	Let us write, 
	
	$$\|\Pi(D_n,\hat{f}_n)- \Pi(D,f)\|\leq \|\Pi(D_n,\hat{f}_n)- \Pi(D,\hat{f}_n)\|+\| \Pi(D,\hat{f}_n)- \Pi(D,f)\|.$$
	Let us bound, 
	$$\left\|\Pi(D_n,\hat{f}_n) -  	 \Pi(D,\hat{f}_n)\right\|\leq  	\int_{\mathbb{R}^d} \|t\||\mathbf{1}_{\{D(X_i)\geq a\}}-\mathbf{1}_{\{D_n(X_i)\geq a\}}|\,\hat{f}_n(t)dt. $$

	From \textbf{(H'4)},  $D(t) - S_n \;\le\; D_n(t) \;\le\; D(t) + S_n.$  	Hence,
	\begin{multline*}
		\|\Pi(D_n,\hat{f}_n)  -  	 \Pi(D,\hat{f}_n)\|  \leq
		\int_{\mathbb{R}^d}\|t\||\mathbf{1}_{\{(D(t) + S_n)\geq a\}}- \mathbf{1}_{\{(D(t) - S_n)\geq a\}}|\mathbf{1}_{\{S_n\leq \delta\}}\hat{f}_n(t)dt \ + \\
		\int_{\mathbb{R}^d} \|t\||\mathbf{1}_{\{(D(t) + S_n)\geq a\}} - \mathbf{1}_{\{(D(t) - S_n)\geq a\}}|\mathbf{1}_{\{S_n\geq \delta\}}\hat{f}_n(t)dt=A(\delta)+B(\delta)
	\end{multline*}
	On one hand,   
	$$B(\delta)\leq  \int_{\mathbb{R}^d} \|t\|\hat{f}_n(t)dt\mathbf{1}_{\{S_n\geq \delta\}}\xrightarrow{\text{a.s.}} 0,$$
	since $\mathbf{1}_{\{S_n\geq \delta\}}\xrightarrow{\text{a.s.}} 0$, and 
	$$\int_{\mathbb{R}^d} \|t\|\hat{f}_n(t)dt \xrightarrow{\text{a.s.}} \mathbb{E}\|X_1\|.$$

	On the other hand,
	\begin{align*}
		A_n(\delta)\leq & \int_{\mathbb{R}^d}\|t\||\mathbf{1}_{\{(D(t)+\delta)\geq a\}}-\mathbf{1}_{\{(D(t)-\delta)\geq a\}}|\hat{f}_n(t)dt \\
		\to &\mathbb{E}\Big(\|X_1\||\mathbf{1}_{\{(D(X_1)+\delta)\geq a\}}-\mathbf{1}_{\{(D(X_1)-\delta)\geq a\}}|\Big):=A(\delta), \quad \text{ as }n\to \infty.
	\end{align*}
	Then,  $\overline{\lim} A_n(\delta)=A(\delta)$. 	Let us prove that $A(\delta)\to 0$ as $\delta\to 0^+$. We will use Dominated Convergence Theorem. 	First, $\|X_1\||\mathbf{1}_{\{(D(X_1)+\delta)\geq a\}}-\mathbf{1}_{\{(D(X_1)-\delta)\geq a\}}|\leq \|X_1\|\in L^1.$ 
	From \textbf{(H'3)}, with probability one, 
	$$\mathbf{1}_{\{(D(X_1)+\delta)\geq a\}}-\mathbf{1}_{\{(D(X_1)-\delta)|\geq a\}}= 0\qquad \text{ as } \delta \to 0^+.$$
	Lastly, $A(\delta)\to 0$ as $\delta \to 0^+$.

\end{proof}

\section{On the Haddamard differentiability of trimmed means.}\label{hadd}

Let us consider 
\[
\mathcal{D}
\;=\;
\{\,y:\mathbb{R}^d \to \mathbb{R}\;\text{continuous and bounded}\},
\]
and $\mathcal{U}$ as in \textbf{(H4)}. Define,
\[
\mathcal{D}\mathcal{G}
\;=\;
\{\,g:\mathbb{R}^d \to \mathbb{R}\;\text{continuous on } \mathcal{U}, \text{ bounded },\;g\in L^1(\mathbb{R}^d)\}.
\]
Let $D\in\mathcal{D}$ and $f\in\mathcal{D}\mathcal{G}$ such that \textbf{(H1)} to \textbf{(H4)} hold.
Let $0<a<T$ and $a_1 < a < a_2 < T$  such that   $K=\{\,t\in\mathbb{R}^d : a_1 \le D(t) < a_2\}\subset \cup_{i=1}^\ell \Omega_i$ and \eqref{menormu} and \eqref{disjoint} holds. Then,
\[
\mathcal{T}(\tilde{y},g)
\;=\;
\int_{K}
t\,
1_{\{\,\tilde{y}(t)>a\}}
\;g(t)\;dt
\quad
\text{for any }(\tilde{y},g)\in \mathcal{D}\times \mathcal{D}\mathcal{G}.
\]
The aim of this section is prove that $\mathcal{T}$ is  Hadamard differentiable at $(D,f)$ and 
\[
\mathcal{T}'(\tilde{y},g)
\;=\;
\sum_{i=}^\ell\Bigg\{ \int_{S^{d-1}}
\tau_i(a,\vec{n})\,
f\bigl(\tau_i(a,\vec{n})\bigr)\,
\left|\det\!\bigl(d_{(a,\vec{n})}\tau_i\bigr)\right|\,
\tilde{y}\bigl(\tau_i(a,\vec{n})\bigr)
\,d\vec{n}
\;+\;
\int_{\mathbb{R}^d}
t\,
\mathbf{1}_{\{D(t)\geq a\}}
\,g(t)\,dt\Bigg\},
\]
see Theorem \ref{thhaddamard} below.  

We will prove the result for $\ell=1$ but the general proof follows exactly the same ideas, so in what follows we denote just $\tau$ instead of $\tau_i$. The proof is based on some technical results. 

\begin{lem} \label{lem5}
	Let $G: I \to \mathbb{R}^d$ be a continuous and bounded function on the bounded interval $I \subset \mathbb{R}$; assume that $H: I \to \mathbb{R}$ is uniformly continuous and set, for $a \in \overset{\circ}{I}$:
	\[
	V_{\varepsilon}(a) = \int_I G(\delta) \left( \mathbf{1}_{\{ H(\delta) \geq \frac{a-\delta}{\varepsilon} \}} - \mathbf{1}_{\{ \delta \geq a \}} \right) \frac{d\delta}{\varepsilon}.
	\]
	Then, $\lim_{\varepsilon \to 0^+} V_{\varepsilon}(a) = G(a) H(a).$
\end{lem}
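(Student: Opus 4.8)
The plan is to first rewrite the moving indicator so that the thin layer on which the two indicators disagree becomes explicit. Since $\varepsilon>0$, for every $\delta$ we have $H(\delta)\ge \tfrac{a-\delta}{\varepsilon}\iff \delta+\varepsilon H(\delta)\ge a$, so
\[
V_\varepsilon(a)=\int_I G(\delta)\,\Big(\mathbf{1}_{\{\delta+\varepsilon H(\delta)\ge a\}}-\mathbf{1}_{\{\delta\ge a\}}\Big)\,\frac{d\delta}{\varepsilon}.
\]
Because $H$ is uniformly continuous on the bounded interval $I$ it is bounded; set $M:=\sup_I|H|<\infty$. A short case check then shows that the difference of indicators vanishes unless $|\delta-a|\le \varepsilon M$: if $\delta<a$ yet $\delta+\varepsilon H(\delta)\ge a$ then $0<a-\delta\le \varepsilon H(\delta)\le \varepsilon M$, and the reverse case is symmetric. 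Since $a\in\overset{\circ}{I}$, for $\varepsilon$ small this layer $[a-\varepsilon M,\,a+\varepsilon M]$ sits inside $I$.

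Next I would freeze $G$ at $a$, writing $G(\delta)=G(a)+(G(\delta)-G(a))$ and splitting $V_\varepsilon(a)$ accordingly. The remainder is killed by the thinness of the layer: the difference of indicators is supported in an interval of length $2\varepsilon M$, so
\[
\Big\|\int_I (G(\delta)-G(a))\big(\mathbf{1}_{\{\delta+\varepsilon H(\delta)\ge a\}}-\mathbf{1}_{\{\delta\ge a\}}\big)\frac{d\delta}{\varepsilon}\Big\|\le 2M\!\!\sup_{|\delta-a|\le \varepsilon M}\!\!\|G(\delta)-G(a)\|\longrightarrow 0
\]
by continuity of $G$ at $a$. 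It then remains to show the principal term $G(a)\int_I\big(\mathbf{1}_{\{\delta+\varepsilon H(\delta)\ge a\}}-\mathbf{1}_{\{\delta\ge a\}}\big)\frac{d\delta}{\varepsilon}$ converges to $G(a)H(a)$, i.e. that the integral tends to $H(a)$.

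To evaluate the integral I would compare the genuine threshold $\delta+\varepsilon H(\delta)$ with the \emph{frozen} one $\delta+\varepsilon H(a)$. For the frozen threshold the level set is the half-line $\{\delta\ge a-\varepsilon H(a)\}$, and a direct computation gives $\int_I(\mathbf{1}_{\{\delta\ge a-\varepsilon H(a)\}}-\mathbf{1}_{\{\delta\ge a\}})\frac{d\delta}{\varepsilon}=H(a)$ exactly, with the sign of $H(a)$ absorbed automatically, valid once $\varepsilon M$ is small enough. The main obstacle—and the only delicate point—is that $H$ need not be monotone, so $\{\delta+\varepsilon H(\delta)\ge a\}$ is not an interval and cannot be integrated directly. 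I would dispose of this by bounding the symmetric difference between the true and frozen level sets: the two indicators disagree at $\delta$ only when $a$ lies between $\delta+\varepsilon H(\delta)$ and $\delta+\varepsilon H(a)$, forcing $|\delta+\varepsilon H(a)-a|\le \varepsilon|H(\delta)-H(a)|$ and also $|\delta-a|\le\varepsilon M$. Given $\eta>0$, uniform continuity supplies $\varepsilon_0$ with $|H(\delta)-H(a)|<\eta$ whenever $|\delta-a|\le\varepsilon M$ and $\varepsilon<\varepsilon_0$; hence the disagreement set lies in an interval of length $2\varepsilon\eta$ and contributes at most $2\eta$ after dividing by $\varepsilon$. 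Letting $\eta\to0$ yields convergence of the integral to $H(a)$, and together with the vanishing remainder this gives $\lim_{\varepsilon\to0^+}V_\varepsilon(a)=G(a)H(a)$.
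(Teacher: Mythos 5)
Your proof is correct, and it is organized differently from the paper's. The paper approximates $H$ \emph{globally} by a step function $H^\eta=\sum_i H_i\mathbf{1}_{I_i}$ on a partition of $I$ into cells where $H$ oscillates by less than $\eta$, computes the limit of the resulting $V^\eta_\varepsilon(a)$ exactly (only the cell containing $a$ survives for small $\varepsilon$), bounds $\|V_\varepsilon(a)-V^\eta_\varepsilon(a)\|$ by roughly $2\eta\|G(a)\|$ via a symmetric-difference estimate, and then runs a double limit, $\varepsilon\to 0^+$ followed by $\eta\to 0^+$. You instead localize first (the indicators disagree only on the layer $|\delta-a|\le\varepsilon M$), freeze $G$ at the single point $a$ with the remainder killed by continuity of $G$ and the thinness of the layer, and then freeze $H$ at the single point $a$, so the principal term reduces to an exact half-line computation giving $H(a)$; the error is again a symmetric-difference bound, with the disagreement set trapped in an interval of length $2\varepsilon\eta$. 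The underlying mechanism is the same in both arguments --- replace $H$ locally by a constant and control the symmetric difference of the level sets by the oscillation of $H$ near $a$ --- but your single-point freezing eliminates the partition, the bookkeeping of the cell $I_{i_a}$, and the choice of $\varepsilon_\eta$, and it makes transparent that only continuity of $H$ at $a$ (plus boundedness, which follows from uniform continuity on the bounded interval) is actually used. Your version is the more economical one; the paper's partition buys nothing extra for this lemma, and its proof additionally contains some garbled estimates in step (C) that your cleaner decomposition avoids entirely.
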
	

\begin{proof} Pick $\eta > 0$; take $I_1, \dots, I_{N_{\eta}}$ as a partition of $I$ into disjoint intervals such that
	\[
	\sup_{x, y \in I_i} |H(x) - H(y)| < \eta, \quad \forall 1 \leq i \leq N_{\eta},
	\]
	and that $a \in \overset{\circ}{I_i}$ is an interior point of $I_i$. Define $H_i$ as the value of $H$ at an arbitrary point of $I_i$ and set:
	\[
	H^\eta = \sum_{i=1}^{N_{\eta}} H_i \mathbf{1}_{I_i},
	\qquad \text{ and }\qquad 
	V_{\varepsilon}^{\eta}(a) = \int_I G(\delta) \left( \mathbf{1}_{\{ H^{\eta}(\delta) \geq \frac{a-\delta}{\varepsilon} \}} - \mathbf{1}_{\{ \delta \geq a \}} \right) \frac{d\delta}{\varepsilon}.
	\]
	Then,
	\[
	V_{\varepsilon}^{\eta}(a) = \sum_{i=1}^{N_{\eta}} \int_{I_i} G(\delta) \left( \mathbf{1}_{\{ H_i \geq \frac{a-\delta}{\varepsilon} \}} - \mathbf{1}_{\{ \delta \geq a \}} \right) \frac{d\delta}{\varepsilon}.
	\]
	For $0 < \varepsilon < \varepsilon_{\eta}$ such that $(a - \varepsilon_\eta(\|H\|_\infty + \eta) ,a+\varepsilon_\eta(\|H\|_\infty + \eta))\subset I_{i_a}$,
	\[
	V_{\varepsilon}^{\eta}(a) = \sum_{i=1}^{N_{\eta}} \int_{I_i \cap [a - \varepsilon H_i, a]} G(\delta) \frac{d\delta}{\varepsilon}.
	\]
	
	Observe that, 
	$$\int_{I_{i_a}\cap [a-\varepsilon H_{i_a},a)} \frac{G(\delta)}{\epsilon}d\delta=\int_{a-\varepsilon H_{i_a}}^a\frac{G(\delta)}{\epsilon}d\delta=-H_{i_a}\int_{a-\varepsilon H_{i_a}}^a \frac{G(\delta)}{-\epsilon H_{i_a}}d\delta\to H_{i_a}G(a)=H^\eta(a)G(a). $$ 
	Then, 
	\begin{equation}
		V_{\varepsilon}^{\eta}(a) \longrightarrow H^\eta(a) G(a) \quad \text{as } \varepsilon \to 0^+. \tag{A}
	\end{equation}
	It is obvious that:
	\begin{equation}
		H^\eta(a) G(a) \longrightarrow H(a) G(a) \quad \text{as } \eta \to 0^+. \tag{B}
	\end{equation}
	In addition: 
	\begin{align*}
		\|V_{\varepsilon}(a) - V_{\varepsilon}^{n}(a) \| &=\Bigg\| \int_I G(\delta) \left( \mathbf{1}_{\{H(\delta) > \frac{a - \delta}{\varepsilon}\}} - \mathbf{1}_{\{H^\eta(\delta) > \frac{a - \delta}{\varepsilon}\}} \right) d\delta\Bigg\|  \\
		&\leq \int_I \|G(\delta)\| \frac{\mathbf{1}_{\left\{ \delta > a - \varepsilon H(\delta) \right\} \Delta \left\{\delta > a - \varepsilon H^\eta(\delta) \right\}}}{\varepsilon} d\delta \\
		&= \int_I \|G(\delta)\| \frac{\mathbf{1}_{\left\{ a - \varepsilon H(\delta) > \delta > a - \varepsilon H^\eta(\delta) \right\}}}{\varepsilon} d\delta \\
		&\hspace{2cm}+ \int_I \|G(\delta)\| \frac{\mathbf{1}_{\left\{ a - \varepsilon H^\eta(\delta) > \delta > a - \varepsilon H(\delta) \right\}}}{\varepsilon} d\delta \\
		&\leq \int_I \|G(\delta)\| \frac{\mathbf{1}_{\left\{ a - \varepsilon (H^\eta(\delta) - \eta) > \delta > a - \varepsilon H^\eta(\delta) \right\}}+\mathbf{1}_{\left\{ a - \varepsilon H^\eta(\delta) > \delta > a - \varepsilon (H^\eta(\delta) + \eta) \right\}}}{\varepsilon} d\delta \\
		&= \sum_{i=1}^{N_{\eta}} \int_{I_i} \|G(\delta)\| \frac{\mathbf{1}_{\left\{ a - \varepsilon H_i + \varepsilon \eta > \delta > a - \varepsilon H_i - \varepsilon \eta \right\}}}{\varepsilon} d\delta \\
		&= \int_{I_i^a\cap  (a - \varepsilon H_{i_a} - \varepsilon \eta, a - \varepsilon H_{i_a} + \varepsilon \eta)} \frac{\|G(\delta)\|}{\varepsilon} d\delta \hspace{1cm}  \text{for} \ \ 0 < \varepsilon < \varepsilon_{\gamma}^*\\	
		&= (\eta - H_{i_a}) \int_{a}^{a+(\eta - H_{i_a}) \varepsilon} \|G(\delta)\| \, d\delta 
		+ (H_{i_a} + \eta) \int_{a}^{a - \varepsilon (\eta + H_{i_a})} \frac{\|G(\delta)\|}{-\varepsilon (\eta + H_{i_a})} \, d\delta \\ &\longrightarrow_{\varepsilon \to 0^+} 	\|G(\delta)\| \bigl((\eta - H_{i_a}) + (H_{i_a} + \eta)\bigr) = 2\eta \|G(a)\|.
	\end{align*}
	Then, 
	\begin{equation}
		\lim_{\varepsilon \to 0^+} \|V_{\varepsilon}(a) - V_{\varepsilon}^{n}(a) \| \leq 2\eta \|G(a)\|. \tag{C}
	\end{equation}
	From (A), (B), and (C), we get:
	\begin{align*}
		\overline{\lim_{\varepsilon \to 0^+}} \|V_{\varepsilon}(a) - H(a) G(a) \| 
		&\leq \overline{\lim_{\varepsilon \to 0^+}}  \|V_{\varepsilon}(a) - V_{\varepsilon}^{\eta}(a) \| \\
		&\qquad + \overline{\lim_{\varepsilon \to 0^+}}  \|V_{\varepsilon}^{\eta}(a) - H^\eta(a) G(a) \| 
		+\overline{\lim_{\varepsilon \to 0^+}}  \|H^\eta(a) G(a) - H(a) G(a)\| \\
		&\leq 2\eta \|G(a)\| + 0 + 0.
	\end{align*}
	
	Taking \(\eta \downarrow 0^+\), we get: $\lim_{\varepsilon \to 0^+} \|V_{\varepsilon}(a) - H(a) G(a) \| = 0.$ Thus, the Lemma follows.
	
\end{proof}

\begin{prop} \label{prop1} Assume that  \textbf{(H1)} to \textbf{(H4)}  hold.  Let \(0 < a < T\), \(r > 0\), and $\mathcal{U}$ as in \textbf{(H4)}. Define,
	\[
	\mathcal{B}(r) = \{ Y: \mathbb{R}^d \to \mathbb{R} \text{ continuous on }\mathcal{U} \text{ and bounded, with } \|Y\|_{\infty} \leq r \},
	\]
	and
	\[
	\Delta_{\varepsilon}(Y) = \int_{\mathbb{R}^d} t \left( \frac{\mathbf{1}_{\{|D(t) + \varepsilon Y(t)| \geq a\}} - \mathbf{1}_{\{|D(t)| \geq a\}}}{\varepsilon} \right) f(t) dt.
	\]
	Then,
	\[
	\lim_{\varepsilon \to 0^+}  \sup_{Y \in \mathcal{X}} \| \Delta_{\varepsilon}(Y) - \Delta(Y) \| =0, \qquad \text{where } \mathcal{X} \text{ is any } \|\cdot\|_{\infty} \text{-compact subset of } \mathcal{B}(r),
	\]
	and
	\begin{equation*}
		\Delta(Y) = \int_{S^{d-1}} \tau(a, \vec{n}) \left| \det(d\tau(a, \vec{n})) \right| Y(\tau(a, \vec{n})) f(\tau(a, \vec{n})) d\vec{n}.
	\end{equation*}
\end{prop}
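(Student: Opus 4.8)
The plan is to reduce the $d$–dimensional integral defining $\Delta_\varepsilon(Y)$ to a one–parameter family (indexed by $\vec n\in S^{d-1}$) of the scalar integrals treated in Lemma \ref{lem5}, by means of the change of variables $t=\tau(s,\vec n)$, and then to upgrade the resulting pointwise limit to a uniform one over the compact family $\mathcal{X}$ by an equicontinuity argument.

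First I would localize and simplify. Since $D>0$ and $\|Y\|_\infty\le r$, for every $\varepsilon<a/r$ one has $D(t)+\varepsilon Y(t)\ge D(t)-\varepsilon r>-a$, so the absolute values may be dropped and $\Delta_\varepsilon(Y)=\tfrac1\varepsilon\int_{\mathbb{R}^d} t\,\bigl(\mathbf 1_{\{D(t)+\varepsilon Y(t)\ge a\}}-\mathbf 1_{\{D(t)\ge a\}}\bigr)f(t)\,dt$. The integrand vanishes off the shell $\{\,|D(t)-a|\le \varepsilon r\,\}$, which for small $\varepsilon$ is contained in the open neighbourhood $\tau(\Omega)$ of the level set $C_a$, where by Lemma \ref{lemdifeo} $\tau$ is a $\mathcal{C}^p$–diffeomorphism; here I use that $\{D\ge a_1\}$ is compact by \textbf{(H1)} and that $f$ is continuous there by \textbf{(H4)}.

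Next, apply the change of variables $t=\tau(s,\vec n)$, with $dt=|\det(d\tau(s,\vec n))|\,ds\,d\vec n$ and, crucially, $D(\tau(s,\vec n))=s$, since by \eqref{eq1} the first coordinate of $\tau^{-1}$ is $D$. Writing $G_{\vec n}(s)=\tau(s,\vec n)\,f(\tau(s,\vec n))\,|\det(d\tau(s,\vec n))|$ and $H_{\vec n}(s)=Y(\tau(s,\vec n))$, and noting that $\{s+\varepsilon Y(\tau(s,\vec n))\ge a\}=\{H_{\vec n}(s)\ge (a-s)/\varepsilon\}$, the integral becomes $\Delta_\varepsilon(Y)=\int_{S^{d-1}} V_\varepsilon^{\,\vec n}\,d\vec n$, where each $V_\varepsilon^{\,\vec n}$ has exactly the form of $V_\varepsilon(a)$ in Lemma \ref{lem5} (with $G=G_{\vec n}$ continuous and bounded and $H=H_{\vec n}$ uniformly continuous on the relevant compact interval). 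That lemma yields $V_\varepsilon^{\,\vec n}\to G_{\vec n}(a)H_{\vec n}(a)=\tau(a,\vec n)\,|\det(d\tau(a,\vec n))|\,Y(\tau(a,\vec n))\,f(\tau(a,\vec n))$, whose integral over $S^{d-1}$ is precisely $\Delta(Y)$; this already gives the pointwise statement for each fixed $Y$.

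The main obstacle is the uniformity over $\mathcal{X}$, and this is where I expect the real work to lie. I would show that $\sup_{Y\in\mathcal{X},\,\vec n\in S^{d-1}}\bigl|V_\varepsilon^{\,\vec n}-G_{\vec n}(a)H_{\vec n}(a)\bigr|\to0$, which suffices since $S^{d-1}$ has finite measure. To this end, observe that $G_{\vec n}(s)$ is jointly continuous on a compact parameter set, hence uniformly bounded and uniformly continuous in $(s,\vec n)$, while $\|H_{\vec n}\|_\infty\le r$ and—the key point—the family $\{H_{\vec n}:Y\in\mathcal{X},\ \vec n\in S^{d-1}\}$ is uniformly equicontinuous: by Arzel\`a--Ascoli, $\|\cdot\|_\infty$–compactness of $\mathcal{X}$ forces equicontinuity of $\mathcal{X}$ on the compact shell, and $\tau$ is uniformly continuous there. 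Re-examining the proof of Lemma \ref{lem5}, the threshold $\varepsilon_\eta$, the partition into $N_\eta$ pieces, and the error terms (A), (B), (C) depend on $H_{\vec n}$ only through its sup norm and modulus of continuity and on $G_{\vec n}$ only through the same data; since all of these are uniform over the family, one partition and one $\varepsilon_\eta$ serve simultaneously, and the convergence in Lemma \ref{lem5} becomes uniform, giving the claim. An equivalent route is to combine the pointwise limit with a finite $\eta$–net for $\mathcal{X}$ together with a uniform-in-$\varepsilon$ equicontinuity estimate for $\Delta_\varepsilon$; either way the technical heart is transferring the compactness of $\mathcal{X}$ into uniform control of the moduli of continuity entering Lemma \ref{lem5}.
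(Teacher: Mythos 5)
Your proposal is correct, and its first half coincides with the paper's own argument: the same localization to the compact shell $K=\{t:\,a_1\le D(t)\le a_2\}$ (dropping the absolute values since $D>0$, with $K$ compact by \textbf{(H1)}), the same change of variables $(\delta,\vec n)=\tau^{-1}(t)$ exploiting that the first coordinate of $\tau^{-1}$ is $D$, and the same identification of the inner integral with the quantity $V_\varepsilon(a)$ of Lemma \ref{lem5}, which gives $\Delta_\varepsilon(Y)\to\Delta(Y)$ for fixed $Y$ (the paper passes the limit through the integral over $S^{d-1}$ by a generalized dominated convergence argument with dominating functions $M_\varepsilon(\vec n)$ whose integrals converge; your uniform-in-$\vec n$ control subsumes this step). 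Where you genuinely diverge is the uniformity over $\mathcal{X}$. The paper keeps Lemma \ref{lem5} as a pointwise black box and works at the level of the operators: it takes a finite $\eta$-net $Y^1,\dots,Y^{N_\eta}$ of $\mathcal{X}$, proves the Lipschitz-type bound $\|\Delta(Y)-\Delta(Y^i)\|\le C\|Y-Y^i\|_\infty$ (its estimate (D)), and then --- the delicate step --- proves a stability estimate for $\Delta_\varepsilon$ uniform in small $\varepsilon$ (its estimate (E)) by applying Lemma \ref{lem5} a second time, to the shifted functions $Y^i\pm\|Y^i-Y\|_\infty$, before closing with the triangle inequality. You instead strengthen Lemma \ref{lem5} itself to a uniform statement over families sharing a sup-norm bound and a common modulus of continuity, supplying that modulus via Arzel\`a--Ascoli (sup-norm compactness of $\mathcal{X}$ gives equicontinuity on the compact shell) together with uniform continuity of $\tau$ and joint continuity of $G_{\vec n}(\delta)$ on $[a_1,a_2]\times S^{d-1}$. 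This mechanism is sound: in the proof of Lemma \ref{lem5} the partition, the threshold $\varepsilon_\eta$, and the error bounds (A)--(C) depend on $(G,H)$ only through $\|H\|_\infty$, the modulus of continuity of $H$, and the continuity of $G$ near $a$, so one partition and one $\varepsilon_\eta$ serve the whole family, and this simultaneously yields the uniformity in $\vec n$ that the paper obtains by domination. The trade-off is clear: the paper's route is more modular, reusing the lemma exactly as stated (twice), whereas yours front-loads the work into a uniform variant of the lemma and then needs no net at the level of $\Delta$ at all; to be complete, though, you would have to actually write out and prove that uniform version, since the published Lemma \ref{lem5} is only a pointwise statement.
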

\begin{proof}
If $Y \in \mathcal{B}(\tau)$ and $D(t) < a - \varepsilon \tau$, then $D(t) + \varepsilon Y(t) < a$,
	and if $D(t) > a + \varepsilon \tau$, then $D(t) + \varepsilon Y(t) > a$. In both cases,
	the integrator in $\Delta_\varepsilon(Y)$ equals 0. By taking $\varepsilon$ small enough,
	we can reduce the integral in $\Delta_\varepsilon(Y)$ to the compact set:
	\begin{equation*}
		K = \{t \in \mathbb{R}^d : a_2 \geq D(t) > a_1 \},
	\end{equation*}
	where $0 < a_1 < a < a_2 < T$ are fixed from now on. 	Therefore, for $0 < \varepsilon < \varepsilon_0$,
	\begin{equation}\label{eq0}
		\Delta_\varepsilon(Y) = \int_K t \frac{\left( \mathbf{1}_{\{ D(t) + \varepsilon Y(t) \geq a \}} - \mathbf{1}_{\{ D(t) \geq a \}} \right)}{\varepsilon} f(t) dt.
	\end{equation}
	
	Denoting by $\nu_x$ the first coordinate of the vector $\nu$:
	\begin{multline*}
		\Delta_\varepsilon(Y) =	\int_K \tau(\tau^{-1}(t)) \left( \mathbf{1}_{\{ (\tau^{-1}(t))_x + \varepsilon Y(\tau(\tau^{-1}(t))) \geq a \}} - \mathbf{1}_{\{ (\tau^{-1}(t))_x \geq a \}} \right) f(\tau(\tau^{-1}(t))) \\
		\times \left| \det(d \tau^{-1}_t) \right| \left| \det(d \tau)_{\tau^{-1}(t)} \right| dt.
	\end{multline*}
	
	Using change of variables $ (\delta, \vec{n}) = \tau^{-1}(t)$, see \citet{lang2012real}:
	\begin{equation}\label{chvar}
		\Delta_\varepsilon(Y) =	\int_{\tau^{-1}(K)} \tau(\delta, \vec{n}) \frac{\mathbf{1}_{\{ \delta + \varepsilon Y(\tau(\delta, \vec{n})) > a \}} - \mathbf{1}_{\{ \delta > a \}}}{\varepsilon} f(\tau(\delta, \vec{n}))	\times \left| \det(d \tau)_{(\delta, \vec{n})} \right| d\delta d\vec{n}.
	\end{equation}
	Then, $\Delta_\varepsilon(Y)$ equals,
	\begin{equation*}
\int_{S^{d-1}} \int_{a_1}^{a_2} \tau(\delta, \vec{n}) \frac{\mathbf{1}_{\{ \delta + \varepsilon Y(\tau(\delta, \vec{n}))\geq  a \}} - \mathbf{1}_{\{ \delta \geq a \}}}{\varepsilon} f(\tau(\delta, \vec{n})) \left| \det(d \tau)_{(\delta, \vec{n})} \right| d\delta d\vec{n}=
		\int_{S^{d-1}} I_\varepsilon(\vec{n}) d\vec{n}.
	\end{equation*}
	Fix $\vec{n} \in S^{d-1}$. Define,
$$G(\delta)  =   \tau(\delta, \vec{n}) f(\tau(\delta, \vec{n})) \left| \det(d \tau)_{(\delta, \vec{n})} \right|,\qquad 
		H(\delta)  = Y(\tau(\delta, \vec{n})),\qquad \text{and} \quad I =  [a_1, a_2].$$
	It is clear that  Lemma  \ref{lem5} applies, and we deduce,
	\begin{equation}
		\lim_{\varepsilon \to 0^+} I_\varepsilon(\vec{n}) = \tau(a, \vec{n}) f(\tau(a, \vec{n})) \left| \det(d \tau)_{(a, \vec{n})} \right| Y(\tau(a, \vec{n})) := I(\vec{n}), \quad \forall \vec{n} \in S^{d-1}.
	\end{equation}
	
	On the other hand
	\[
	\bigl\|I_\varepsilon(\vec{n})\bigr\|
	\;\le\;
	\int_{a_1}^{a_2}
	\bigl\|\tau(\delta,\vec{n})\bigr\|\;
	\mathbf{1}_{\{\{\delta + \varepsilon\,Y(\tau(\delta,\vec{n})) \,\ge\, a\}
		\,\triangle \{\delta,a\}\}}\;
	f\bigl(\tau(\delta,\vec{n})\bigr)\,
	\bigl|\det\bigl(d\tau\bigr)_{(\delta,\vec{n})}\bigr|\,
	d\delta\,ds
	\]
	\[
	\;\le\;
	\int_{a_1}^{a_2}
	\bigl\|\tau(\delta,\vec{n})\bigr\|\;
	\mathbf{1}_{\{\,\delta - \varepsilon\,r < a \,\le\, \delta + \varepsilon\,r\}}
	\;f\bigl(\tau(\delta,\vec{n})\bigr)\,
	\bigl|\det\bigl(d\tau\bigr)_{(\delta,\vec{n})}\bigr|\,
	d\delta\,ds.
	\]
	Where we used that $\|Y\|_\infty \leq r$. If \(0 < \varepsilon < (1/r)\min\{a_2 -a, a - a_1\}$, then
	\[
	\bigl\|I_\varepsilon(\vec{n})\bigr\|\leq \int_{\,a - \varepsilon r}^{\,a + \varepsilon r}
	\bigl\|\tau(\delta,\vec{n})\bigr\|\,
	\bigl|\det\bigl(d\tau\bigr)_{(\delta,\vec{n})}\bigr|\,
	f\bigl(\tau(\delta,\vec{n})\bigr)\,d\delta
	\;=\;
	M_\varepsilon(\vec{n}).
	\]
	Moreover, as \(\varepsilon \to 0^{+}\), $M_\varepsilon(\vec{n})
	\;\longrightarrow\;
	2\,r\,
	\bigl\|\tau(a,\vec{n})\bigr\|\,
	\bigl|\det\bigl(d\tau\bigr)_{\!(a,\vec{n})}\bigr|\,
	f\bigl(\tau(a,\vec{n})\bigr)
	\;=\;
	M(\vec{n}),$
	and by Tonelli's theorem,
	\[
	\int_{S^{d-1}} M_\varepsilon(\vec{n})\,d\vec{n}
	\;=\;\frac{1}{\varepsilon} \int_{\,a - \varepsilon r}^{\,a + \varepsilon r}
	\int_{S^{d-1}}\bigl\|\tau(\delta,\vec{n})\bigr\|\,
	f\bigl(\tau(\delta,\vec{n})\bigr)\,
	\bigl|\det\bigl(d\tau\bigr)_{(\delta,\vec{n})}\bigr|\,
	d\delta\,ds .
	\]
	
	Since for \(\delta \in [a_1,a_2]\), the map $(\delta,\vec{n})
	\;\longmapsto\;
	\|\tau(\delta,\vec{n})\|\,
	f\bigl(\tau(\delta,\vec{n})\bigr)\,
	\bigl|\det(d\tau)\bigr|_{(\delta,\vec{n})},$
	is continuous on the compact domain \([a_1,a_2]\times S^{d-1}\), so its integral is a continuous function of \(\delta\). Then,
	
	\[
	\lim_{\varepsilon \to 0^{+}}
	\,2r
	\int_{S^{d-1}} M_\varepsilon(\vec{n})\,d\vec{n}=\;
	\int_{S^{d-1}} M(\vec{n})\,d\vec{n}.
	\]
	
	From where it follows that,
	\begin{enumerate}
		\item[(i)] \(\|I_\varepsilon(\vec{n})\|\;\le\; M_\varepsilon(\vec{n})\quad
		\forall\,\vec{n}\in S^{d-1},\;\forall\,0<\varepsilon<\varepsilon_{1}.\)
		
		\item[(ii)] \(M_\varepsilon(\vec{n}) \;\xrightarrow[\varepsilon\to 0^{+}]{}\; M(\vec{n})
		\quad \forall\,\vec{n} \in S^{d-1}.\)
		
		\item[(iii)] \(\displaystyle
		\int_{S^{d-1}} M_\varepsilon(\vec{n})\,d\vec{n}
		\;\xrightarrow[\varepsilon\to 0^{+}]{}\;
		\int_{S^{d-1}} M(\vec{n})\,d\vec{n}.
		\;\;\;(\text{B})\)
	\end{enumerate}
 
	From (A) and (B) it follows that
	\[
	\int_{S^{d-1}} I_\varepsilon(\vec{n})\,d\vec{n}
	\;\xrightarrow[\varepsilon\to 0^{+}]{}\;
	\int_{S^{d-1}} I(\vec{n})\,d\vec{n},
	\]
	which concludes the proof that
	\[
	\Delta_\varepsilon(Y)
	\;\xrightarrow[\varepsilon\to 0^{+}]{}\;
	\Delta(Y)
	\quad\forall\,Y\in \mathcal{X},
	\quad(\text{C}).
	\]
Now we will show that this convergence is uniform over \(\mathcal{X}\). Take \(\eta>0\) and choose
	\(Y^1,\dots,Y^{N_\eta} \in \mathcal{X}\)
	such that
	\[
	\sup_{\,Y\in \mathcal{X}}
	\;\min_{\,1 \,\le\, i \,\le\, N_\eta}
	\|\,Y - Y^i\,\|
	\;\le\;\eta.
	\]
	It is clear that
	\[
	\|\Delta(Y) \;-\; \Delta(Y^i)\|
	\;\le\;
	\Bigl(\,
	\int_{S^{d-1}}
	\|\tau(a_i,\vec{n})\|\,
	\bigl|\det(d\tau)\bigr|_{(a_i,\vec{n})}\,
	f\bigl(\tau(a_i,\vec{n})\bigr)
	\,d\vec{n}\Bigr)\;\cdot\;
	\|\,Y \;-\; Y^i\,\|.
	\]
	Since the integral above is finite (denote it by \(C\)), we obtain
	
	\[
	\|\Delta(Y)-\Delta(Y^{i})\|\;\le\;\,C\|Y - Y^{i}\| 
	\quad
	\forall\,Y \in \mathcal{X},\;1 \le i \le N_\eta.  \qquad (D)
	\]
 
	On the other hand,
	\begin{align*}
		\|\Delta_\varepsilon(Y) \;-\; \Delta_\varepsilon(Y^{(i)})\|
		= & \frac{1}{\varepsilon}
		\Bigl\|\,
		\int_{K}
		t\,\bigl(
		\mathbf{1}_{\{\,D(t)+\varepsilon\,Y^{(i)}(t)\,>\,a\}}
		\;-\;
		1_{\{\,D(t)+\varepsilon\,Y(t)\,>\,a\}}
		\bigr)\,f(t)\,dt
		\Bigr\|\\
		\leq & 
		\int_{K}
		\|t\|\,\|f(t)\|\,
		\mathbf{1}_{\{
			a-\varepsilon\,Y^{(i)}(t)\;-\;\varepsilon\,\|Y^{(i)}-Y\|_\infty
			\;\le\;
			D(t)
			\;<\;
			a-\varepsilon\,Y^{(i)}(t)\;+\;\varepsilon\,\|Y^{(i)}-Y\|_\infty
			\bigr\}}
		\frac{dt}{\varepsilon}\\
		\leq &
		\int_{K}
		\|t\|\,\|f(t)\|\,
		\mathbf{1}_{\{
			a-\varepsilon\,Y^{(i)}(t)\;-\;\varepsilon\,\|Y^{(i)}-Y\|_\infty
			< D(t) \le a-\varepsilon\,Y^{(i)}(t) + \varepsilon\,\|Y^{(i)}-Y\|_\infty
			\bigr\}}
		\frac{dt}{\varepsilon}.
	\end{align*} 
	
	Then, if we use the same change of variable as in \eqref{chvar}, $\|\Delta_\varepsilon(Y) \;-\; \Delta_\varepsilon(Y^{(i)})$ equals,
\[\int_{S^{d-1}}
	\int_{a_1}^{a_2}
	\|\tau(\delta,\vec{n})\|\,
	f\bigl(\tau(\delta,\vec{n})\bigr)\,
	\bigl|\det(d\tau)\bigr|_{(a_i,\vec{n})}
	\;\mathbf{1}_{\{\,a - \varepsilon\,Y^{(i)}(\tau(\delta,\vec{n})) - \delta \,\le\,\|Y^{(i)}-Y\|_\infty\}}
	\;d\delta\,d\vec{n}.
	\]
	Observe that 
	$$\{\,a-\varepsilon\,Y^{(i)}-\delta\|=\{Y^i+\|Y^i-Y\|_\infty \geq (a-\delta)/\epsilon \}-\{Y^i-\|Y^i-Y\|_\infty \geq (a-\delta)/\epsilon\}.$$
	Then, 
	\begin{multline*}
		\|\Delta_\varepsilon(Y) \;-\; \Delta_\varepsilon(Y^{(i)})\|=\\
		\int_{S^{d-1}}
		\int_{a_1}^{a_2}
		\|\tau(\delta,\vec{n})\|\,
		f\bigl(\tau(\delta,\vec{n})\bigr)\,
		\bigl|\det(d\tau)\bigr|_{(a_i,\vec{n})} \frac{\mathbf{1}_{\{Y^i(\tau(\delta,\vec{n}))+\|Y^i-Y\|_\infty\geq (a-\delta)/\epsilon\}}-\mathbf{1}_{\{\delta\geq a\}}}{\epsilon}
		d\delta\,d\vec{n}
		\\
		-\int_{S^{d-1}}
		\int_{a_1}^{a_2}
		\|\tau(\delta,\vec{n})\|\,
		f\bigl(\tau(\delta,\vec{n})\bigr)\,
		\bigl|\det(d\tau)\bigr|_{(a_i,\vec{n})} \frac{\mathbf{1}_{\{Y^i(\tau(\delta,\vec{n}))-\|Y^i-Y\|_\infty\geq (a-\delta)/\epsilon\}}-\mathbf{1}_{\{\delta\geq a\}}}{\epsilon} d\delta\,d\vec{n}=:\;B_\varepsilon.
	\end{multline*}
	Applying Lemma \ref{lem5}  to
	\[
	G(\delta)=\|\tau(\delta,\vec{n})\|\,f(\tau(\delta,\vec{n}))\,\bigl|\det(d\tau)\bigr|_{(\delta,\vec{n})},
	\]
	\[
	H^*(\delta) = Y^{(i)}(\tau(\delta,\vec{n})) + \|\,Y^{(i)}-Y\|_\infty, \quad  \text{ and  } \quad \tilde{H}(\delta)= Y^{(i)}(\tau(\delta,\vec{n})) + \|\,Y^{(i)}-Y\|_\infty,
	\]
	and dominating the integrals, we deduce that $B_\varepsilon$ converges, as $\varepsilon\to 0^+$, to 
	\[ \int_{S^{d-1}}
	\|\tau(a_i,\vec{n})\|\,
	f(\tau(a_i,\vec{n}))
	\,\bigl|\det(d\tau)\bigr|_{(a_i,\vec{n})}
	\;\bigl|\,
	\bigl(Y^i(\tau(a_i,\vec{n})) + \|Y^i - Y\|_\infty - Y^i(\tau(a_i,\vec{n})) + \|Y^i - Y\|_\infty\bigr)
	\bigr|\,
	d\vec{n},
	\]
	which implies that 
	\[
	\lim_{\varepsilon\to 0}
	\|\Delta_\varepsilon(Y)\;-\;\Delta_\varepsilon(Y^{\,i})\|
	\;\le\;C\,\|\,Y^i - Y\|_\infty,\quad
	\forall\,Y\in\mathcal{X},\;1\le i\le N_\eta,
	\]
	\noindent
	but, more precisely: (using the monotony of \(B_\varepsilon\) with respect to \(\|Y^i - Y\|_\infty\))
	\[
	\text{If }\|Y^i - Y\|_\infty < h \text{ and } \alpha < \varepsilon < \varepsilon(h)
	\quad \text{then} \quad 
	\|\Delta_\varepsilon(Y)\;-\;\Delta_\varepsilon(Y^{\,i})\|\;\le\;C\,h.
	\qquad(E)
	\]
	Consider now any \(Y\in\mathcal{X}\); take \(Y^i\)  such that  \(\|Y^i - Y\|_\infty < \gamma\), then,  
	\[
	\|\Delta(Y)\;-\;\Delta_\varepsilon(Y)\|
	\;\le\;
	\|\Delta(Y)\;-\;\Delta(Y^{\,i})\|
	\;+\;
	\|\Delta(Y^{\,i})\;-\;\Delta_\varepsilon(Y^{\,i})\|
	\;+\;
	\|\Delta_\varepsilon(Y^{\,i})\;-\;\Delta_\varepsilon(Y)\|.
	\]
	Using (D) and (E) we get that, if $0<\varepsilon<\varepsilon(\gamma)$ then 
	$$ \|\Delta(Y)\;-\;\Delta_\varepsilon(Y)\|
	\;\le\;
	C\,\gamma
	\;+\;
	\|\Delta(Y^{\,i})\;-\;\Delta_\varepsilon(Y^{\,i})\|.$$
	Furthermore, 
	\[
	\sup_{Y\in\mathcal{X}}
	\|\Delta(Y)\;-\;\Delta_\varepsilon(Y)\|
	\;\le\;
	C\,\gamma
	\;+\;
	\max_{1\le i\le N_\gamma}
	\|\Delta(Y^{\,i})\;-\;\Delta_\varepsilon(Y^{\,i})\|.
	\]
	Since $ \Delta_\varepsilon(Y^{\,i})\,\to\,  \Delta(Y^{\,i})$ as $\varepsilon\to 0^+$ $\forall\,1\le i\le N_\eta$  and $N_\eta$ is finite,	
	$$ \overline{\lim_{\varepsilon\to 0}}
	\sup_{Y\in\mathcal{X}}
	\|\Delta(Y)\;-\;\Delta_\varepsilon(Y)\|
	\;\le\;
	C\,\gamma.$$
	Taking now $\gamma\to 0^+$,   the result follows.
\end{proof}

\begin{prop}\label{prop2}
	Let $\mathcal{G}$ be a compact subset of 
	$$
	B_{PD}(r)\;=\Big\{\,g:\mathbb{R}^d \to \mathbb{R}\;:\;g\in L^1(\mathbb{R}^d)  \text{ is bounded and continuous on }\mathcal{U},  \|g\|_\infty \le r\Big\}.$$
	Assume \textbf{(H1)} to \textbf{(H4)} and let $Y\in\mathcal{X}$, $\mathcal{X}$ be as in Proposition \ref{prop1}.
	Let
	\[
	\nabla_{\varepsilon}(g)
	\;=\;
	\int_{\mathbb{R}^d}
	t\;
	\mathbf{1}_{\{\;D(t)+\varepsilon\,Y(t) \geq  a\}}\;
	g(t)\;dt,
	\qquad \text{ and }\qquad 
	\nabla(g)
	\;=\;
	\int_{\mathbb{R}^d}
	t\;
	\mathbf{1}_{\{\;D(t) \geq  a\}}\;
	g(t)\;dt.
	\]
	Then,
	\[
	\sup_{\,Y\in\mathcal{X},\,g\in\mathcal{G}}
	\|\nabla_{\varepsilon}(g) \;-\; \nabla(g)\|
	\;\xrightarrow[\varepsilon\to 0^{+}]{}\;0.
	\]
\end{prop}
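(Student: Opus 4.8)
The plan is to control the difference $\nabla_\varepsilon(g)-\nabla(g)$ by isolating the thin region where the two indicator functions disagree and then showing that this region collapses onto the level set $C_a=\{D=a\}$, which has Lebesgue measure zero. Since every $Y\in\mathcal{X}$ and every $g\in\mathcal{G}$ satisfies the uniform bounds $\|Y\|_\infty\le r$ and $\|g\|_\infty\le r$, all of the estimates below will be independent of the particular $Y$ and $g$, so the supremum over $\mathcal{X}\times\mathcal{G}$ is handled automatically.

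First I would write
\[
\nabla_\varepsilon(g)-\nabla(g)=\int_{\mathbb{R}^d} t\,\bigl(\mathbf{1}_{\{D(t)+\varepsilon Y(t)\ge a\}}-\mathbf{1}_{\{D(t)\ge a\}}\bigr)\,g(t)\,dt,
\]
and observe that the integrand vanishes unless the two indicators differ. A short case analysis shows that, because $|\varepsilon Y(t)|\le \varepsilon r$, disagreement can occur only when $a-\varepsilon r\le D(t)< a+\varepsilon r$; hence the integration is confined to $R_\varepsilon:=\{t:|D(t)-a|\le \varepsilon r\}$. Taking norms and using $|g(t)|\le r$ yields the uniform bound
\[
\sup_{Y\in\mathcal{X},\,g\in\mathcal{G}}\|\nabla_\varepsilon(g)-\nabla(g)\|\le r\int_{R_\varepsilon}\|t\|\,dt .
\]

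Next I would localise and pass to the limit. By \textbf{(H1)} the set $\{D\ge a_1\}$ is bounded and $D$ is continuous, so $K=\{a_1\le D\le a_2\}$ is compact; for $\varepsilon r<\min\{a-a_1,a_2-a\}$ we have $R_\varepsilon\subset K$, and therefore $\|t\|\,\mathbf{1}_{R_\varepsilon}(t)\le \bigl(\max_{t\in K}\|t\|\bigr)\mathbf{1}_K(t)\in L^1(\mathbb{R}^d)$, an integrable dominating function. Since the sets $R_\varepsilon$ decrease to $\bigcap_{\varepsilon>0}R_\varepsilon=C_a$ as $\varepsilon\to 0^+$, we have $\|t\|\,\mathbf{1}_{R_\varepsilon}(t)\to \|t\|\,\mathbf{1}_{C_a}(t)$ pointwise. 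By Lemma \ref{lemdifeo}, under \textbf{(H1)}--\textbf{(H3)} each $C_a^i$ is $\mathcal{C}^p$-diffeomorphic to $S^{d-1}$, so $C_a$ is a finite union of compact $(d-1)$-dimensional submanifolds of $\mathbb{R}^d$ and hence Lebesgue-null; the pointwise limit is thus $0$ almost everywhere. The Dominated Convergence Theorem then gives $\int_{R_\varepsilon}\|t\|\,dt\to 0$, which combined with the uniform bound completes the argument.

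The only genuinely delicate point is the measure-zero claim for $C_a$: it is precisely here that the regularity supplied by \textbf{(H3)} (through Lemma \ref{lemdifeo}) is needed, since a merely topological sphere, as provided by Lemma \ref{lem3} alone, need not be Lebesgue-null. Everything else reduces to the elementary localisation of the symmetric difference of the two indicator sets and a single application of dominated convergence; no rate of convergence is required, so the estimate stays robust and uniform in the pair $(Y,g)$.
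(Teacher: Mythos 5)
Your proof is correct, but it takes a genuinely different route from the paper's. The paper first fixes $g$ (and $Y$) and obtains $\nabla_{\varepsilon}(g)\to\nabla(g)$ by dominated convergence, and then upgrades to uniformity by a finite $\eta$-net argument over the compact set $\mathcal{G}$, using a Lipschitz-type bound $\|\nabla_{\varepsilon}(g)-\nabla_{\varepsilon}(g_i)\|\le C(r)\,\|g-g_i\|_\infty$ --- the same template it uses for Proposition \ref{prop1}. You instead bound the error once and for all: the two indicators can disagree only on $R_\varepsilon=\{t:|D(t)-a|\le\varepsilon r\}$, which gives the single estimate $\sup_{Y,g}\|\nabla_{\varepsilon}(g)-\nabla(g)\|\le r\int_{R_\varepsilon}\|t\|\,dt$, and this tends to $0$ because $R_\varepsilon$ is eventually contained in the compact set $K$ and shrinks down to $C_a$, which is Lebesgue-null by the $\mathcal{C}^p$ structure of Lemma \ref{lemdifeo}. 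Your route is more elementary (one application of dominated convergence, no nets) and proves a slightly stronger statement: it never uses compactness of $\mathcal{G}$ or $\mathcal{X}$, only the sup-norm bounds $\|g\|_\infty\le r$ and $\|Y\|_\infty\le r$, and it makes the uniformity in $Y$ explicit --- a point the paper's proof leaves implicit, since its net runs over $\mathcal{G}$ alone while the terms $\|\nabla_{\varepsilon}(g_i)-\nabla(g_i)\|$ still depend on $Y$. The trade-off is that you must invoke the geometric fact that $C_a$ is a finite union of $(d-1)$-dimensional $\mathcal{C}^p$ submanifolds (hence null), which requires \textbf{(H3)} through Lemma \ref{lemdifeo}; but this is not really an extra cost, because the paper's ``trivial'' dominated-convergence step silently needs the same null-set property of $\{D=a\}$ (the pointwise limit of the indicators can fail exactly there), and your remark that the merely topological spheres of Lemma \ref{lem3} would not suffice is precisely the right point to flag.
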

\begin{proof}
	Using again the remark that the integral can be reduced to a compact set, for a fixed $g$, $\nabla_{\varepsilon}(g)\to \nabla(g)$ as $\varepsilon\to 0^{+}$, as a trivial consequence of Dominated Convergence Theorem.	
	Consider now $\eta > 0$, and $g_1,\dots,g_{N_\eta}\in \mathcal{G}$ such that
	\[
	\sup_{\,g\in\mathcal{G}}\;
	\min_{\,1 \le i \le N_\eta}\;\|\,g - g_i\|_\infty
	\;<\;\eta.
	\]
Clearly,
	\[
	\|\nabla(g) - \nabla(g_i)\|
	\;\le\;
	C\,\|\,g - g_i\|_\infty.
	\]
	Let $K$ be the compact set 
	\(\{\,t : D(t)\in [a_1,a_2]\, 0<a_1<a<a_2<T\}\),
	and so on. Then
\begin{multline*}
	\|\nabla_{\varepsilon}(g) \;-\; \nabla_{\varepsilon}(g_i)\|\leq \int_{K} \|t\|\mathbf{1}_{\{\;D(t)+\varepsilon\,Y(t) \geq  a\}}dt\|g-g_i\|\leq \int_{K} \|t\|\mathbf{1}_{\{\;D(t)+\varepsilon r \geq  a\}}dt\|g-g_i\|\\
	 \le\;C(r)\,\|\,g - g^i\|.
\end{multline*}
	
	It follows that, for $\varepsilon$ small,
\begin{multline*}
		\|\nabla_\varepsilon(g)\;-\;\nabla(g)\|
	\;\le\;
	C(r)\,\|\,g-g_i\|
	\;+\;
	\|\nabla_\varepsilon(g_i)\;-\;\nabla(g_i)\|
	\;+\;
	C(r)\,\|\,g-g_i\|\\
	=C(r)\eta+\;\|\nabla_\varepsilon(g^i)\;-\;\nabla(g^i)\|. 
\end{multline*}
	Then, 
	\[\sup_{\,g\in\mathcal{G}}
	\|\nabla_\varepsilon(g)\;-\;\nabla(g)\|
	\;\le\;
	C(r)\,\eta
	\;+\;
	\max_{\,1\le i\le N_2}
	\|\nabla_\varepsilon(g_i)\;-\;\nabla(g_i)\|.
	\]
	From where it follows,
	\[
	\lim_{\varepsilon\to 0}
	\sup_{\,g\in\mathcal{G}}
	\|\nabla_\varepsilon(g)\;-\;\nabla(g)\|
	\;\le\;
	Q(r)\,\eta.	\]
	Taking $\eta\to 0^+$, the result follows. 
\end{proof}
\begin{thm}\label{thhaddamard}
	Let $\mathcal{U}$ as in \textbf{(H4)} and 
	\begin{align*}
		\mathcal{D}
		\;=\; & 	\{\,y:\mathbb{R}^d \to \mathbb{R}\;\text{continuous on } \mathcal{U}, \text{ and bounded}\},\\
		\mathcal{D}\mathcal{G}
		\;=\; & 	\{\,g:\mathbb{R}^d \to \mathbb{R}\;\text{continuous on } \mathcal{U}, \text{ and  bounded},\;g\in L^1(\mathbb{R}^d)\}.
	\end{align*}

	Let $D\in\mathcal{D}$ and $f\in\mathcal{D}\mathcal{G}$ such that \textbf{(H1)} to \textbf{(H4)} hold. 
	Let $0<a<T$ and $a_1 < a < a_2 < T$  such that   $K=\{\,t\in\mathbb{R}^d : a_1 \le D(t) < a_2\}\subset \cup_{i=1}^\ell \Omega_i$, \eqref{menormu} and \eqref{disjoint} holds. The functional
	\[
	\mathcal{T}(\tilde{y},g)
	\;=\;
	\int_{K}
	t\,
	1_{\{\,\tilde{y}(t)>a\}}
	\;g(t)\;dt
	\quad
	\text{for any }(\tilde{y},g)\in \mathcal{D}\times \mathcal{D}\mathcal{G},
	\]
	is Hadamard differentiable in $(D,f)$, and

	\[
	\mathcal{T}'(\tilde{y},g)
	\;=\;
	\sum_{i=1}^\ell\Bigg\{
	\int_{S^{d-1}}
	\tau(a,\vec{n})\,
	f\bigl(\tau_i(a,\vec{n})\bigr)\,
	\left|\det\!\bigl(d_{(a,\vec{n})}\tau_i\bigr)\right|\,
	\tilde{y}\bigl(\tau_i(a,\vec{n})\bigr)
	\,d\vec{n}
	\;+\;
	\int_{\mathbb{R}^d}
	t\,
	\mathbf{1}_{\{D(t)\geq a\}}
	\,g(t)\,dt\Bigg\}.
	\]
	
\end{thm}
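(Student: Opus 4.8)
The plan is to verify the sequential characterization of Hadamard differentiability directly. That is, for every direction $(\tilde y, g) \in \mathcal{D}\times\mathcal{D}\mathcal{G}$, every sequence $\varepsilon_n \downarrow 0^+$, and every pair of sequences $\tilde y_n \to \tilde y$, $g_n \to g$ (in $\|\cdot\|_\infty$, with the perturbed pair $(D+\varepsilon_n\tilde y_n,\,f+\varepsilon_n g_n)$ remaining admissible), I must show that the difference quotient converges to $\mathcal{T}'(\tilde y,g)$. The first step is purely algebraic: expanding the perturbed integrand, subtracting $\mathcal{T}(D,f)$ and dividing by $\varepsilon_n$ yields
\[
\frac{\mathcal{T}(D+\varepsilon_n\tilde y_n,\, f+\varepsilon_n g_n)-\mathcal{T}(D,f)}{\varepsilon_n}
= \int_K t\,\frac{\mathbf 1_{\{D(t)+\varepsilon_n\tilde y_n(t)\ge a\}}-\mathbf 1_{\{D(t)\ge a\}}}{\varepsilon_n}\,f(t)\,dt
+ \int_K t\,\mathbf 1_{\{D(t)+\varepsilon_n\tilde y_n(t)\ge a\}}\,g_n(t)\,dt.
\]
(Since $C_a=\{D=a\}$ is a Lebesgue-null $(d-1)$-manifold by Lemma \ref{lem3}, the strict and non-strict indicators agree a.e.\ and may be used interchangeably.) The first summand is exactly $\Delta_{\varepsilon_n}(\tilde y_n)$ of Proposition \ref{prop1}, and the second is $\nabla_{\varepsilon_n}(g_n)$ of Proposition \ref{prop2} evaluated with direction $Y=\tilde y_n$. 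The reduction of the domain to the fixed compact $K$ for all small $\varepsilon$ is the same elementary remark opening both propositions: outside a shrinking neighbourhood of $C_a$, the bounded perturbation $\varepsilon_n\tilde y_n$ cannot flip the sign of $D-a$.

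The crux — and the reason Propositions \ref{prop1} and \ref{prop2} were proved with convergence \emph{uniform over compact sets} rather than merely for a fixed direction — is the passage from a fixed-direction limit to a limit along the moving sequences $\tilde y_n,\,g_n$. Here I would use the standard compactness device: a convergent sequence together with its limit is a compact set. Thus $\mathcal{X}:=\{\tilde y_n\}_n\cup\{\tilde y\}$ is a $\|\cdot\|_\infty$-compact subset of some $\mathcal{B}(r)$, and $\mathcal{G}:=\{g_n\}_n\cup\{g\}$ is a compact subset of some $B_{PD}(r)$, taking $r$ to bound the (eventually bounded) sequences. Proposition \ref{prop1} then gives $\sup_{Y\in\mathcal{X}}\|\Delta_{\varepsilon}(Y)-\Delta(Y)\|\to 0$, and Proposition \ref{prop2} gives $\sup_{Y\in\mathcal{X},\,g\in\mathcal{G}}\|\nabla_{\varepsilon}(g)-\nabla(g)\|\to 0$.

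With these uniform limits in hand, the convergence of each summand follows from a triangle inequality combined with the Lipschitz continuity of the limiting maps. For the depth piece,
\[
\|\Delta_{\varepsilon_n}(\tilde y_n)-\Delta(\tilde y)\|
\;\le\;
\|\Delta_{\varepsilon_n}(\tilde y_n)-\Delta(\tilde y_n)\|
\;+\;
\|\Delta(\tilde y_n)-\Delta(\tilde y)\|,
\]
where the first term vanishes by the uniform convergence over $\mathcal{X}$ just quoted, and the second by the bound $\|\Delta(Y)-\Delta(Y')\|\le C\|Y-Y'\|_\infty$ established inside Proposition \ref{prop1} (estimate (D) there) together with $\tilde y_n\to\tilde y$. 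An identical argument, using the uniform limit of Proposition \ref{prop2} and the Lipschitz dependence $\|\nabla(g)-\nabla(g')\|\le C\|g-g'\|_\infty$, gives $\nabla_{\varepsilon_n}(g_n)\to\nabla(g)$. Adding the two limits produces precisely $\Delta(\tilde y)+\nabla(g)$, the asserted $\mathcal{T}'(\tilde y,g)$ in the case $\ell=1$.

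It then remains only to confirm that $\mathcal{T}'$ is a continuous linear map, so that it genuinely qualifies as a Hadamard derivative. Linearity in $(\tilde y,g)$ is immediate from the closed forms of $\Delta$ and $\nabla$, and boundedness follows from $\|\Delta(\tilde y)\|\le C\|\tilde y\|_\infty$ and $\|\nabla(g)\|\le C'\|g\|_\infty$, the constants being the finite surface integral over $S^{d-1}$ and the finite integral $\int_{\{D\ge a\}}\|t\|\,dt$ (finite since $\{D\ge a\}$ is compact by \textbf{(H1)}). Finally, for the multimodal case $\ell>1$ I would invoke \eqref{disjoint}: the level set $C_a$ splits into the pairwise disjoint pieces $C_a^1,\dots,C_a^\ell$, each contained in its own chart $\tau_i(\Omega_i)$, so $K$ decomposes into disjoint compacts and the whole argument applies verbatim on each, the derivative being the sum of the single-bump contributions. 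The main obstacle is genuinely the uniformity step of the third paragraph; once Propositions \ref{prop1} and \ref{prop2} are available in their uniform-over-compacts form, assembling the Hadamard derivative is routine bookkeeping.
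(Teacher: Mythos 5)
Your proposal is correct and follows essentially the same route as the paper: the same algebraic split of the difference quotient over $K$ into the $\Delta_{\varepsilon}$-term and the $\nabla_{\varepsilon}$-term, handled by Propositions \ref{prop1} and \ref{prop2} respectively, with the case $\ell>1$ reduced to $\ell=1$ via \eqref{disjoint}. If anything, you make explicit the step the paper's own (terser) proof leaves implicit — namely that the uniform convergence over $\|\cdot\|_\infty$-compact direction sets, applied to $\{\tilde y_n\}_n\cup\{\tilde y\}$ and $\{g_n\}_n\cup\{g\}$ together with the Lipschitz bounds of type (D), is exactly what upgrades the fixed-direction (Gateaux) limit to the moving-sequence limit that Hadamard differentiability requires.
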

\begin{proof} As before, we will assume $\ell=1$, the general case is proven exactly the same.
\begin{multline*}
		\frac{\mathcal{T}\bigl(D+\varepsilon \tilde{y},\;f+\varepsilon g\bigr) \;-\; \mathcal{T}(D,f)}{\varepsilon}
	\;=\;
	\int_{K}
	t\,\frac{\mathbf{1}_{\{D(t) + \varepsilon \tilde{y}(t) \geq  a\}}
		\;-\;
		\mathbf{1}_{\{D(t)\geq  a\}}}{\varepsilon}\,
	f(t)\,dt
	\;+\\
	\int_{K}
	t\,
	\mathbf{1}_{\{D(t) + \varepsilon \tilde{y}(t)\geq a\}}
	\,g(t)\,dt. 
\end{multline*}
 
	The first term, as \(\varepsilon \to 0\), it converges uniformly on compact sets to
	\[
	\int_{S^{d-1}}
	\tau(a,\vec{n})\,
	f\bigl(\tau(a,\vec{n})\bigr)\,
	\left|\det\!\bigl(d\tau_{(a,\vec{n})}\bigr)\right|\,
	\tilde{y}\bigl(\tau(a,\vec{n})\bigr)
	\,d\vec{n},
	\]
	by Proposition \ref{prop1}.
	The second term, it converges to
	\[
	\int_{\mathbb{R}^d}
	t\,
	\mathbf{1}_{\{D(t) \geq  a\}}
	\,g(t)\,dt.
	\]
	by Proposition \ref{prop2}. 	Hence the limit of the difference is exactly the derivative claimed.
\end{proof}
\medskip
\noindent
\begin{remark} \label{rem9} If \(\|Y\|_{\infty}\le r\), then for \(\varepsilon\) small enough,
	\begin{multline*}
		\frac{\mathcal{T}\bigl(D + \varepsilon Y,\;f + \varepsilon g\bigr)
			\;-\;
			\mathcal{T}(D,f)
			\;}{ \varepsilon }=\;
		\int_{\mathbb{R}^d}
		t\,
		\frac{\mathbf{1}_{\{D(t) + \varepsilon Y(t) \geq  a\}}
			\;-\;
			\mathbf{1}_{\{D(t) \geq  a\}}}{\varepsilon}
		\,f(t)\,dt
		\;+\; \\
		\int_{\mathbb{R}^d}
		t\,
		\mathbf{1}_{\{D(t)+\varepsilon Y(t) \geq a\}}
		\,g(t)\,dt.
	\end{multline*}
	
\end{remark}

\section{Asymptotic distribution for trimmed means}\label{tclsec}

In this section, we derive the asymptotic distribution of the trimmed mean estimator \eqref{estimador} for the case $D_n(x)=D(x,\hat{f}_n)$. As before, the theorem is stated for the case $\ell=1$; however, the proof follows the same ideas in the general case. Hypothesis \textbf{(H4)} is assumed to hold for $p=2$.

The following theorem assumes that $\hat{f}_n$ is a sequence of continuous functions that serve as estimators of $f$ (not necessarily kernel-based).

\begin{thm}\label{tcl} Let $D$ be such that \textbf{(H1)} to \textbf{(H4})  holds.  
	Assume also that 
	\begin{itemize}
		\item[(i)] $D$ is Hadamard differentiable with respect to $(x,f)$, and that
		\[
		\frac{\partial D}{\partial g}(x,g),
		\]
		is continuous as a function of $(x,g)$ in a neighborhood of $f$.   
		\item[(ii)] $\|\hat{f}_n-f\|_{\infty}\to 0$ a.s on compact sets, and 
		\item[(iii)] $\sqrt{a_n}\bigl(\hat{f}_n - f\bigr)(t)$ converges weakly to $b(t)$, on $\{t:D(t)\geq a\}$, $b(t)$ being a stochastic process indexed on $\{t:D(t)\geq a\}$.
	\end{itemize}
	Let us denote
	$$\Pi(D_n,\hat{f}_n)=\int_{\mathbb{R}^d} t\mathbf{1}_{\{D_n(t)\geq a\}}\hat{f}_n(t)dt\qquad \text{and} \qquad \Pi(D,f)=\int_{\mathbb{R}^d} t\mathbf{1}_{\{D(t)\geq a\}}f(t)dt,$$
	then $\sqrt{a_n}\,\bigl(\Pi(D_n,f_n)\;-\Pi(D,f)\bigr)$ converges weakly to 
	\begin{equation} \label{eqthm}
		\int_{S^{d-1}}
		\tau(a,\vec{n}) \;
		f\bigl(\tau(a,\vec{n})\bigr)\;
		\frac{\partial D}{\partial g}\bigl(\tau(a,\vec{n}),f\bigr)b(\tau(a,\vec{n}))	\left|\det\!\bigl(d\tau_{(a,\vec{n})}\bigr)\right|
		\,d\vec{n}
		\;+\;
		\int_{\mathbb{R}^d}
		t\;\mathbf{1}_{\{D(t)\geq a\}}\;b(t)\,dt.
	\end{equation}
\end{thm}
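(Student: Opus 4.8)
The plan is to exhibit the estimator as the image of the single random element $\hat{f}_n$ under a composition of two Hadamard differentiable maps, and then to read off the limit law from the functional delta method, feeding in the weak convergence of assumption (iii). Writing $\Phi(g)=\bigl(D(\cdot,g),\,g\bigr)$, one has
\[
\Pi(D_n,\hat{f}_n)=\mathcal{T}\bigl(\Phi(\hat{f}_n)\bigr),\qquad \Pi(D,f)=\mathcal{T}\bigl(\Phi(f)\bigr),
\]
with $\mathcal{T}$ the functional of Theorem \ref{thhaddamard}. Since $D\to 0$ at infinity by \textbf{(H1)}, the set $\{t:D(t)\ge a\}$ is bounded, so both integrals are effectively over a compact region; the identity $\mathbf{1}_{\{D_n\ge a\}}=\mathbf{1}_{\{D\ge a\}}$ off the band $K$ for $n$ large (guaranteed, as in the proof of Proposition \ref{prop1} and Remark \ref{rem9}, because $\sqrt{a_n}(D_n-D)$ is bounded in probability and $\{D=a\}$ is Lebesgue-null by Lemma \ref{lem3}) is what allows Theorem \ref{thhaddamard} to be applied to the full $\mathbb{R}^d$ integral.

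Next I would establish the differentiability of the inner map. By hypothesis (i) the map $g\mapsto D(\cdot,g)$ is Hadamard differentiable at $f$, with directional derivative in a direction $h$ written $\tfrac{\partial D}{\partial g}(\cdot,f)\,h$, and the continuity of $\tfrac{\partial D}{\partial g}$ in $(x,g)$ ensures this derivative is a continuous linear operator into the space of bounded continuous functions. The identity coordinate of $\Phi$ is trivially differentiable, so $\Phi$ is Hadamard differentiable at $f$ with $\Phi'_f(h)=\bigl(\tfrac{\partial D}{\partial g}(\cdot,f)\,h,\;h\bigr)$. Because $\mathcal{T}$ is Hadamard differentiable at $\Phi(f)=(D,f)$ by Theorem \ref{thhaddamard}, the chain rule for Hadamard differentiability gives that $g\mapsto\mathcal{T}(\Phi(g))$ is Hadamard differentiable at $f$ with derivative $\mathcal{T}'_{(D,f)}\circ\Phi'_f$.

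Finally I would invoke the functional delta method. Assumption (iii) provides $\sqrt{a_n}\,(\hat{f}_n-f)\rightsquigarrow b$ on $\{t:D(t)\ge a\}$, and (ii) ensures $\hat{f}_n$ eventually lies in the neighbourhood of $f$ where the above differentiability holds; hence
\[
\sqrt{a_n}\bigl(\Pi(D_n,\hat{f}_n)-\Pi(D,f)\bigr)\;\rightsquigarrow\;\mathcal{T}'_{(D,f)}\Bigl(\tfrac{\partial D}{\partial g}(\cdot,f)\,b,\;b\Bigr).
\]
Substituting $\tilde{y}=\tfrac{\partial D}{\partial g}(\cdot,f)\,b$ and $g=b$ into the explicit expression for $\mathcal{T}'$ from Theorem \ref{thhaddamard} (taking $\ell=1$) reproduces exactly the surface integral over $S^{d-1}$ and the bulk integral over $\{D\ge a\}$ in \eqref{eqthm}, which identifies the limit.

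The step I expect to be the main obstacle is the careful setup of the delta method in this process-valued, infinite-dimensional setting: one must fix the normed function spaces (say bounded continuous functions on $\mathcal{U}$ with $\|\cdot\|_\infty$, or uniform convergence on compacts) so that both $\Phi$ and $\mathcal{T}$ are Hadamard differentiable \emph{tangentially} to a set containing the sample paths of $b$, reconcile the pointwise convergence in (iii) with the uniform differentiability proved in Proposition \ref{prop1}, and check that $\tfrac{\partial D}{\partial g}(\cdot,f)\,b$ is an admissible tangent direction for $\mathcal{T}$. Once these measure-theoretic and topological compatibilities are secured, the algebraic identification of the limit with \eqref{eqthm} is routine.
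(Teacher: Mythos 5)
Your proposal is correct and takes essentially the same route as the paper: the paper likewise reduces $\Pi$ to the functional $\mathcal{T}$ of Theorem \ref{thhaddamard} (via Remark \ref{rem9}), uses assumption (i) to write $D_n-D=\tfrac{\partial D}{\partial g}(\cdot,f)\,(\hat f_n-f)+o\bigl(\|\hat f_n-f\|_\infty\bigr)$, and then pushes the weak convergence in (iii) through the resulting two linear functionals by the continuous mapping theorem --- i.e., it carries out by hand exactly the chain rule plus functional delta method that you invoke as named theorems. Even the obstacle you flag at the end (that (iii) must be interpreted as uniform weak convergence on a compact set containing $\{t:D(t)\ge a\}$ for the argument to go through) is present, and treated with the same level of informality, in the paper's own proof.
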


\begin{remark}
	Assumptions \textbf{(H1)} through \textbf{(H4)} were discussed in Remark \ref{remhipo}.  Assumption (i) is satisfied, for example, by Tukey’s depth (see Lemma 5.6 in \citet{masse2009}). Assumption (ii) is discussed in detail in point 1 of Remark \ref{remhipo}. Regarding assumption (iii) see Corollary   in \citet{Rosenblatt1976}. 	
\end{remark}
\begin{proof} 
	Let us observe that,
	\begin{align*}
		\Pi(D_n,\hat{f}_n)\;-\;\Pi(D,f)
		\;=\; & 	 \int_{\mathbb{R}^d}
		t\mathbf{1}_{\{D_n(t)\geq a\}} \hat{f}_n(t)\,dt
		-
		\int_{\mathbb{R}^d}
		t\,\mathbf{1}_{\{D(t)\geq a\}}\, f(t)dt\\
		=&\int_{\mathbb{R}^d} t(\mathbf{1}_{\{D_n(t)\geq a \}}-\mathbf{1}_{\{D(t)\geq a \}})f(t)dt+ \int_{\mathbb{R}^d}\mathbf{1}_{\{D_n(t)\geq a \}}(\hat{f}_n(t)-f(t))dt.
	\end{align*}
	By Remark    \ref{rem9}, for $n$ large enough, $\Pi(D_n,\hat{f}_n)\;-\;\Pi(D,f)=\mathcal{T}(D_n,f_n)\;-\;\mathcal{T}(D,f).$
	By the Hadamard differentiability of $T$ at $(D,f)$,
	\[	\mathcal{T}(D_n,\hat{f}_n)\;-\;\mathcal{T}(D,f)
	\;=\;
	\mathcal{T}'(D,f)\,\bigl(D_n-D,\;\hat{f}_n-f\bigr)
	\;+\;
	o\bigl(\|D_n-D\|_{\infty}+\|\hat{f}_n-f\|_{\infty}\bigr).
	\]

	Since $D$ is Hadamard differentiable w.r.t. the second component, and the fact that
	\[
	\frac{\partial D}{\partial g}(x,g)
	\]
	is a continuous function of $(x,g)$ in a neighborhood of $f$,  $\|D_n-D\|_\infty\approx C\|\hat{f}_n-f\|_\infty$ where the norm is on compact sets. Then, 	
	\[
	\sqrt{a_n}\,\bigl(\mathcal{T}(D_n,\hat{f}_n)\;-\;\mathcal{T}(D,f)\bigr)
	\;\approx\;
	\sqrt{a_n}\,\bigl[\,
	\mathcal{T}'(D,f)\,\bigl(D_n-D,\;\hat{f}_n-f\bigr)
	\bigr].
	\]
	By Theorem \eqref{thhaddamard}
	\begin{multline*}
		\sqrt{a_n}\,\bigl(\mathcal{T}(D_n,\hat{f}_n)\;-\;\mathcal{T}(D,f)\bigr)
		\;\approx\;
		\sqrt{a_n}
		\int_{S^{d-1}}
		\tau(a,\vec{n})
		f\bigl(\tau(a,\vec{n})\bigr)\,
		\bigl|\det\!\bigl(d\tau_{(a,\vec{n})}\bigr)\bigr|\,
		\bigl(D_n - D\bigr)\bigl(\tau(a,\vec{n})\bigr)\,d\vec{n}
		\;+\;\\
		\sqrt{a_n}
		\int_{\mathbb{R}^d}
		t\,\mathbf{1}_{\{D(t)>a\}}\,
		\bigl(\hat{f}_n(t)-f(t)\bigr)\,dt.
	\end{multline*}
	
	Since 
	$$(D_n-D)(x)=D(x,\hat{f}_n)-D(x,f)=\frac{\partial D}{\partial g}(x,f)(\hat{f}_n-f)+o(\|\hat{f}_n-f\|_\infty),$$
	it follows that,
	\begin{multline*}\label{final}
		\sqrt{a_n}\,\bigl(\mathcal{T}(D_n,\hat{f}_n)\;-\;\mathcal{T}(D,f)\bigr)
		\;\approx\;\\
		\sqrt{a_n}
		\int_{S^{d-1}}
		\tau(a,\vec{n})
		f\bigl(\tau(a,\vec{n})\bigr)\,
		\bigl|\det\!\bigl(d\tau_{(a,\vec{n})}\bigr)\bigr|\,
		\frac{\partial D}{\partial g}(\tau(a,\vec{n}),f)(\hat{f}_n-f)\bigl(\tau(a,\vec{n})\bigr)\,d\vec{n}
		\;+\;\\
		\sqrt{a_n}
		\int_{\mathbb{R}^d}
		t\,\mathbf{1}_{\{D(t)>a\}}\,
		\bigl(\hat{f}_n(t)-f(t)\bigr)\,dt.
	\end{multline*}
	 
	We define two linear operators:
	$$L_1(g)= \int_{S^{d-1}} w_1(\vec{n})\, g\bigl(\tau(a,\vec{n})\bigr) \, d\vec{n},\qquad \text{ and }\qquad 
	L_2(g)= \int_{\mathbb{R}^d} w_2(t)\, g(t)\, dt,$$
	with \(w_1\) and \(w_2\) being the (continuous) weight functions. Since  $\sqrt{a_n}\bigl(\hat{f}_n - f\bigr) $ converges weakly  uniformly,  to a process  $b$ defined on the compact set $\{t:D(t)\geq a\}$.
	
	\[
	L_1\Bigl(a_n\bigl(\hat{f}_n-f\bigr)\Bigr)\overset{\mathcal{L}}{\longrightarrow} L_1(b)
	\quad\text{and}\quad
	L_2\Bigl(a_n\bigl(\hat{f}_n-f\bigr)\Bigr) \overset{\mathcal{L}}{\longrightarrow} L_2(b).
	\]
	where $\overset{\mathcal{L}}{\longrightarrow}$ denotes weak convergence. Since \(L_1\) and \(L_2\) are continuous linear functionals defined on the same space, the vector
	\[
	\Bigl( L_1\bigl(a_n\bigl(\hat{f}_n-f\bigr)\bigr),\; L_2\bigl(a_n\bigl(\hat{f}_n-f\bigr)\bigr) \Bigr)
	,\]
	converges in distribution to $( L_1(b),\; L_2(b))$. Then, By applying the continuous mapping theorem 
	\[
	L_1\bigl(a_n\bigl(\hat{f}_n-f\bigr)\bigr) + L_2\bigl(a_n\bigl(\hat{f}_n-f\bigr)\bigr)
	\overset{\mathcal{L}}{\longrightarrow}L_1(b) + L_2(b).
	\]

\end{proof}

\subsection{Simulations}\label{simus}

This subsection presents a simulation example that illustrates the limiting distribution described in Theorem~\ref{tcl}. To visualize the shape of the limiting distribution due to its complexity, $R_n = \sqrt{a_n}\left( \Pi(D_n, f_n) - \Pi(D, f) \right)$ values are simulated.  For each measure, $500$ values are simulated from the distribution of $R_n$ using two values of \( a \) (selected based on the range of values for each depth measure) and assuming \( a_n = \sqrt{n} \).

Each simulation of \( R_n \) is based on a sample of size \( n = 500 \) drawn from a bivariate Beta distribution with independent components, where each component follows a \(\text{Beta}(2,2)\) distribution. The density estimate \(\hat{f}_n\) is computed using kernel density estimation, with the bandwidth \( h \) chosen according to Silverman’s rule of thumb, see \citet{silverman1986}. In all cases we choose the gaussian kernel. Three classic depth measures are considered: Liu's depth, Tukey's depth, and a projection-based depth measure. Figures \ref{fig:liu}, \ref{fig:tukey}, and \ref{fig:proj} display the estimated densities and their corresponding contour curves for each depth measure and each level of \( a \).

\begin{figure}[htbp]
    \centering

    \begin{subfigure}{\textwidth}
        \centering
        \begin{minipage}{0.48\textwidth}
            \centering
            \includegraphics[width=\textwidth]{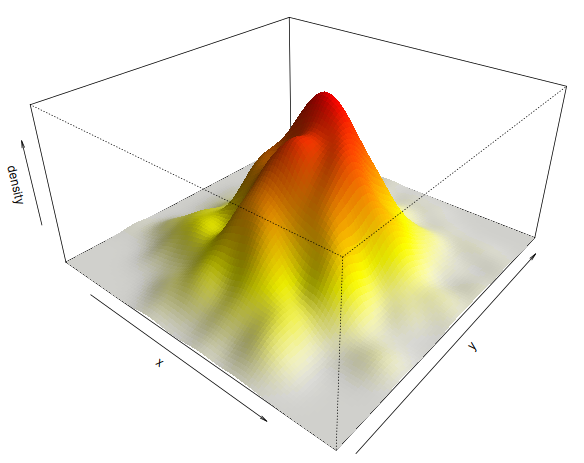}
        \end{minipage}
        \hfill
        \begin{minipage}{0.48\textwidth}
            \centering
            \includegraphics[width=\textwidth]{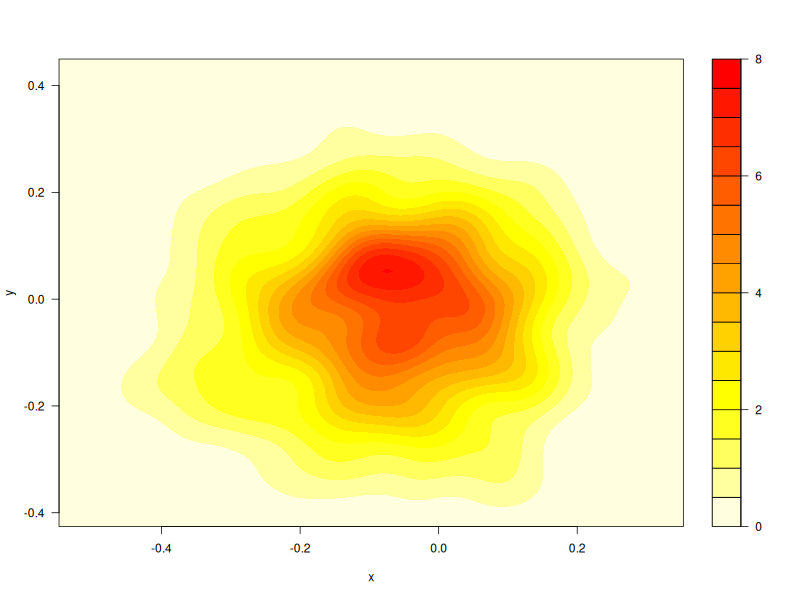}
        \end{minipage}
        \caption{$a=0.05$}
    \end{subfigure}

    \vspace{0.5cm} 

    \begin{subfigure}{\textwidth}
        \centering
        \begin{minipage}{0.48\textwidth}
            \centering
            \includegraphics[width=\textwidth]{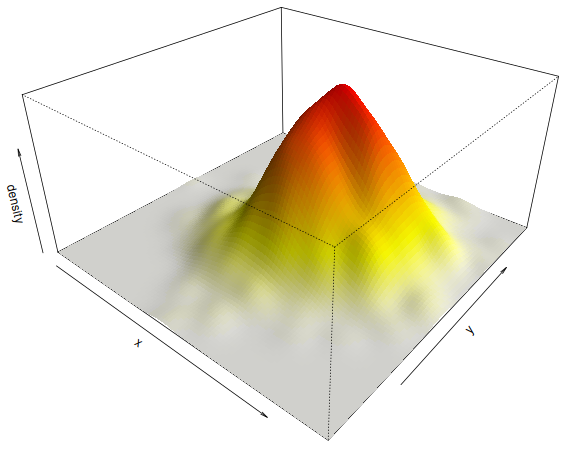}
        \end{minipage}
        \hfill
        \begin{minipage}{0.48\textwidth}
            \centering
            \includegraphics[width=\textwidth]{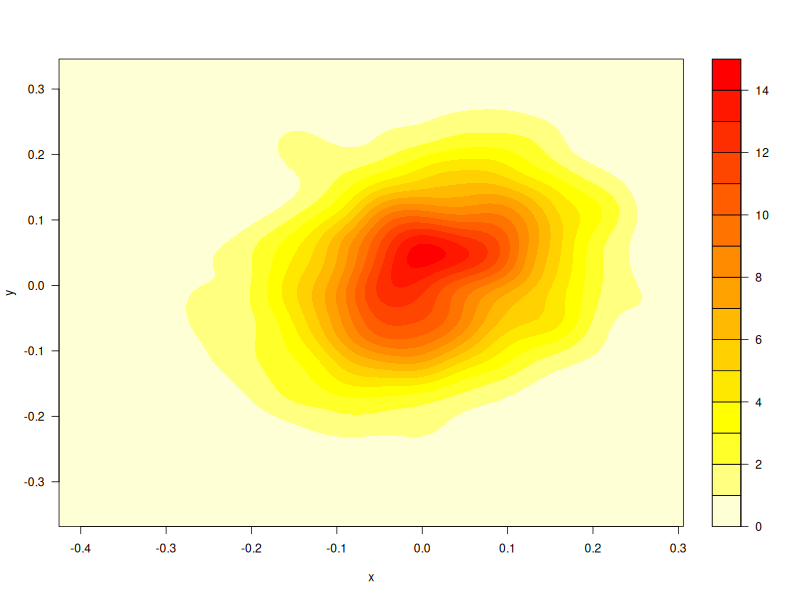}
        \end{minipage}
        \caption{$a=0.1$}
    \end{subfigure}

    \caption{Density estimation (left panel) and corresponding level sets (right panel) of the distribution of $\sqrt{a_n} \left( \Pi(D_n, f_n) - \Pi(D, f) \right)$ for a sample of size $500$, computed using \textit{Liu depth} for different values of $a$.
}
    \label{fig:liu}
\end{figure}

\begin{figure}[htbp]
    \centering

    \begin{subfigure}{\textwidth}
        \centering
        \begin{minipage}{0.48\textwidth}
            \centering
            \includegraphics[width=\textwidth]{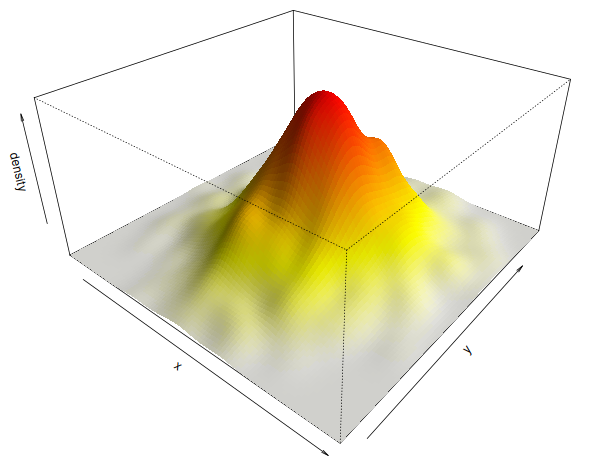}
        \end{minipage}
        \hfill
        \begin{minipage}{0.48\textwidth}
            \centering
            \includegraphics[width=\textwidth]{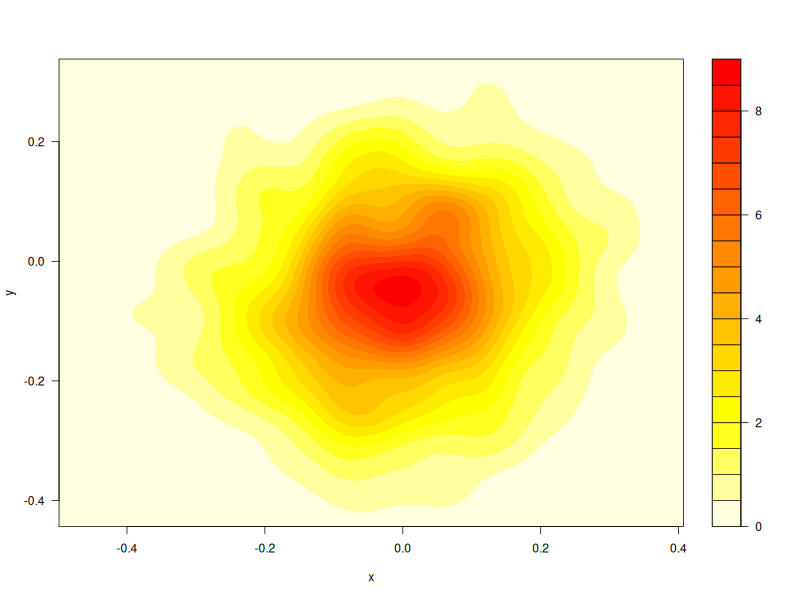}
        \end{minipage}
        \caption{$a=0.1$}
    \end{subfigure}

    \vspace{0.5cm} 

    \begin{subfigure}{\textwidth}
        \centering
        \begin{minipage}{0.48\textwidth}
            \centering
            \includegraphics[width=\textwidth]{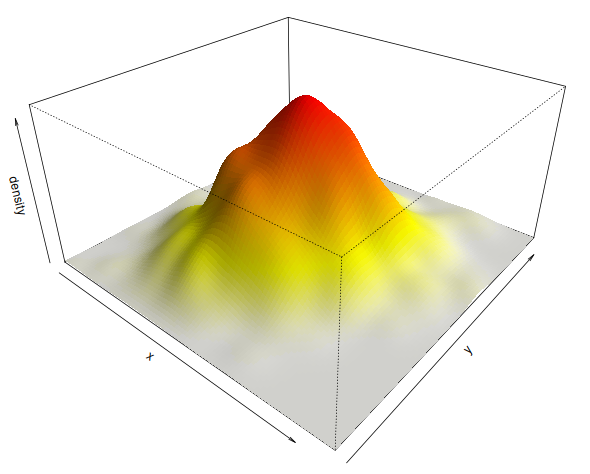}
        \end{minipage}
        \hfill
        \begin{minipage}{0.48\textwidth}
            \centering
            \includegraphics[width=\textwidth]{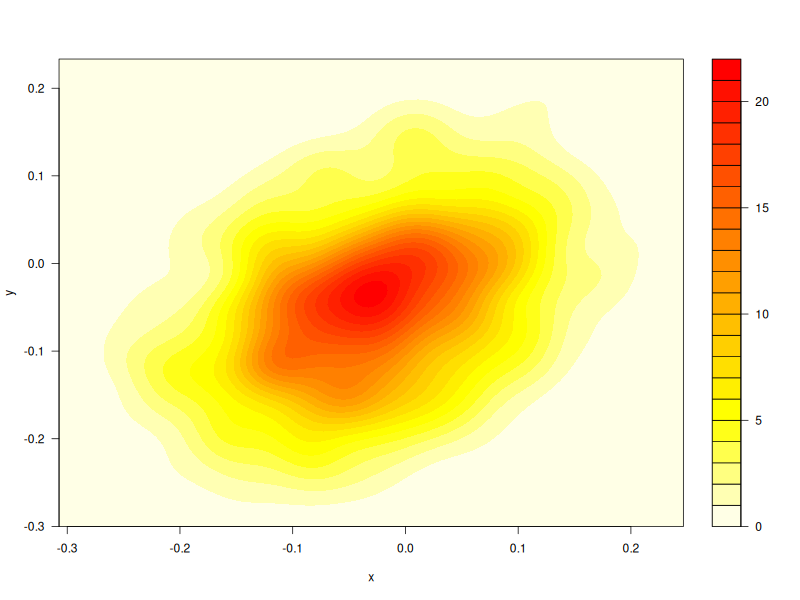}
        \end{minipage}
        
        \caption{$a=0.2$
}
    \end{subfigure}

    \caption{Density estimation (left panel) and corresponding level sets (right panel) of the distribution of $\sqrt{a_n} \left( \Pi(D_n, f_n) - \Pi(D, f) \right)$ for a sample of size $500$, computed using \textit{Tukey depth} for different values of $a$.}
    \label{fig:tukey}
\end{figure}

\begin{figure}[htbp]
    \centering

    \begin{subfigure}{\textwidth}
        \centering
        \begin{minipage}{0.48\textwidth}
            \centering
            \includegraphics[width=\textwidth]{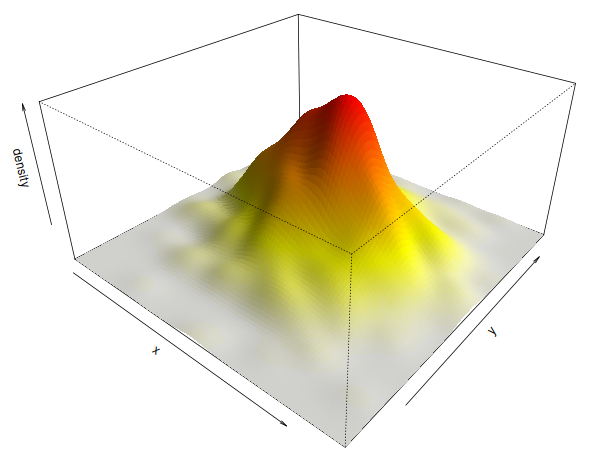}
        \end{minipage}
        \hfill
        \begin{minipage}{0.48\textwidth}
            \centering
            \includegraphics[width=\textwidth]{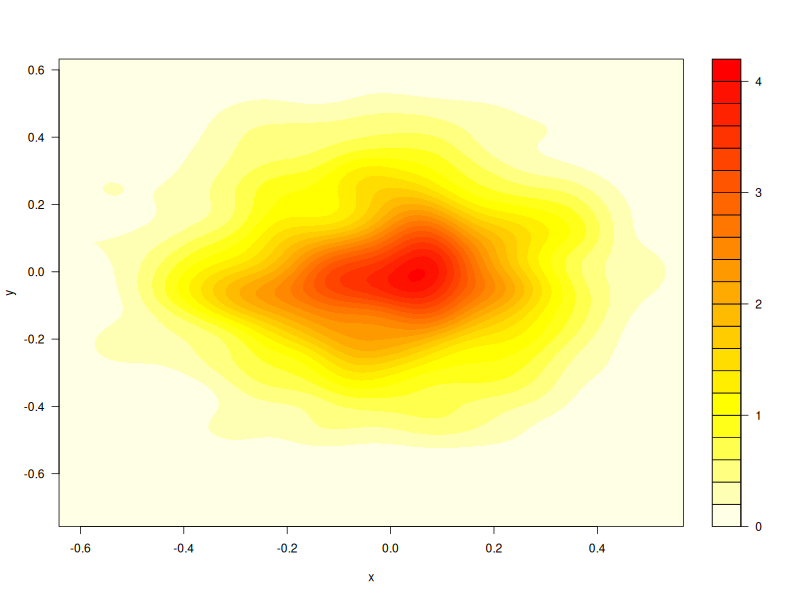}
        \end{minipage}
        \caption{$a=0.1$}
    \end{subfigure}

    \vspace{0.5cm} 

    \begin{subfigure}{\textwidth}
        \centering
        \begin{minipage}{0.48\textwidth}
            \centering
            \includegraphics[width=\textwidth]{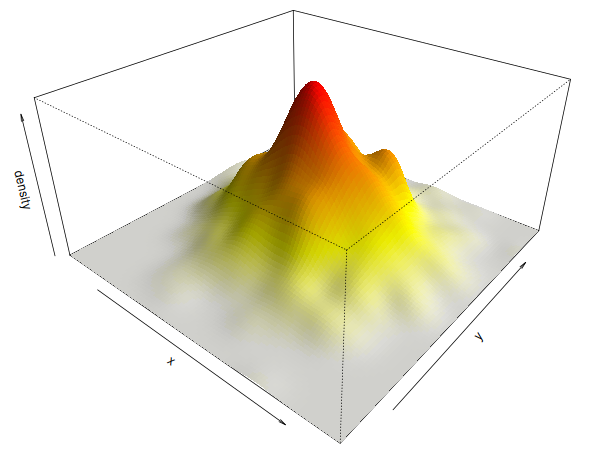}
        \end{minipage}
        \hfill
        \begin{minipage}{0.48\textwidth}
            \centering
            \includegraphics[width=\textwidth]{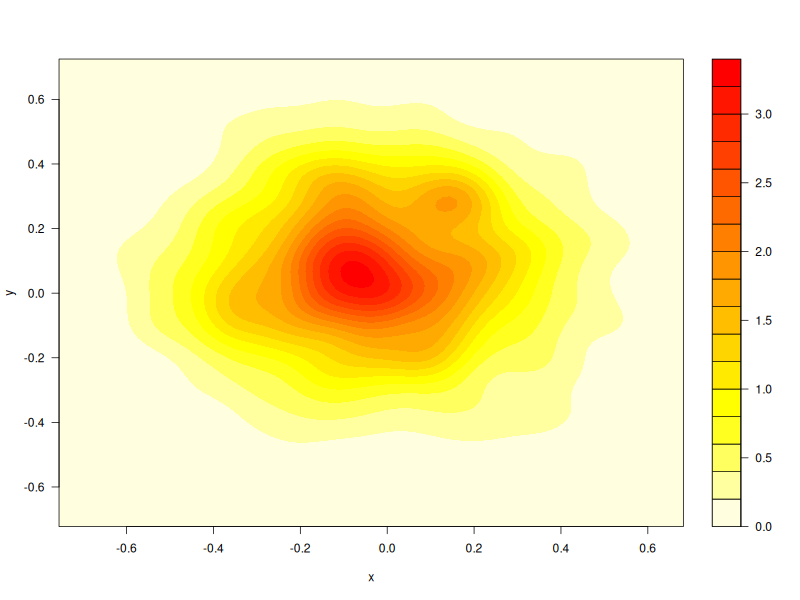}
        \end{minipage}
        \caption{$a=0.2$}
    \end{subfigure}

    \caption{Density estimation (left panel) and corresponding level sets (right panel) of the distribution of $\sqrt{a_n} \left( \Pi(D_n, f_n) - \Pi(D, f) \right)$ for a sample of size $500$, computed using \textit{Projection depth} for different values of $a$.
}
    \label{fig:proj}
\end{figure}

\end{document}